\tikzset{elegant/.style={smooth,thick,samples=50,cyan}}
\tikzset{eaxis/.style={->,>=stealth}}
\newif\ifblog
\newif\iftex
\newcommand{\thmref}[1]{Theorem~{\rm \ref{#1}}}
\newcommand{\lemref}[1]{Lemma~{\rm \ref{#1}}}
\newcommand{\propref}[1]{Proposition~{\rm \ref{#1}}}
\newcommand{\figref}[1]{Figure~{\rm \ref{#1}}}
\def\wu{\widehat{u}}
\def\wU{\widehat{U}}
\def\d{{\rm d}}
\def\P{{\mathbb P}}
\def\E{{\mathbb E}}
\def\Z{{\mathbb Z}}
\def\p{{\partial}}
\newcommand{\cF}{{\cal F}}
\newcommand{\ep}{\varepsilon}
\newcommand{\al}{\alpha}
\newcommand{\nd}{\noindent}
\newcommand{\la}{\lambda}
\newcommand{\si}{\sigma}
\newcommand{\rt}{\rightarrow}
\newtheorem{theorem}{Theorem}[section]
\newtheorem{lemma}[theorem]{Lemma}
\newtheorem{proposition}[theorem]{Proposition}
\newenvironment{proof}{\noindent {\sc Proof:}}{\strut\hfill $\Box$} 
\newcommand{\LL}{{\cal L}}
\newcommand{\TT}{{\cal T}}
\newcommand{\dd}{\operatorname{d}\!}
\newcommand{\dt}{\operatorname{d}\! t}
\renewcommand{\geq}{\geqslant}
\renewcommand{\leq}{\leqslant}
\title{A free boundary problem arising from a multi-state regime-switching stock trading model}
\author{Chonghu Guan\thanks{School of Mathematics, Jiaying University, Meizhou 514015, Guangdong,
China. This author is partially supported by NNSF of China (No. 11901244), NSF of Guangdong Province of China (No. 2016A030307008). Email: \url{316346917@qq.com}.} \and Jing Peng\thanks{Department of Applied Mathematics, The Hong Kong Polytechnic University, Hong Kong, China. Email: \url{jing.peng@connect.polyu.hk}.}\and Zuo Quan Xu\thanks{Department of Applied Mathematics, The Hong Kong Polytechnic University, Hong Kong, China. This author is partially supported by NSFC (No. 11971409), and Hong Kong
GRF (No. 15204216 and No. 15202817). Email: \url{maxu@polyu.edu.hk}.
}}
\date{}
\begin{document}
\maketitle

\begin{abstract}
In this paper, we study a free boundary problem, which arises from an optimal trading problem of a stock that is driven by a uncertain market status process. The free boundary problem is a variational inequality system of three functions with a degenerate operator. The main contribution of this paper is that we not only prove all the four switching free boundaries are no-overlapping, monotonic and $C^{\infty}$-smooth, but also completely determine their relative localities and provide the optimal trading strategies for the stock trading problem.

\bigskip
\nd {\bf Keywords.} free boundary problem; system of parabolic variational inequalities; regime-switching; stock trading

\bigskip
\nd {\bf Mathematics Subject Classification.}
35R35; 
35K87; 
91B70; 
91B60.


\end{abstract}


\setlength{\baselineskip}{0.25in}

\section{Introduction}

This paper considers a free boundary problem arising from a stock trading model. The stock price is driven by a two-state market status process which is unobservable to the trader. We leave the details of the financial and stochastic background of the problem to the interested readers in the Appendix.

The model can be reduced to finding a triple of value functions $(v_0(p,t),\;v_1(p,t),\;v_{-1}(p,t))$ that satisfies the following variational inequality (VI) system
\begin{align}\label{v_eq}
\left\{
\begin{array}{ll}
\min\Big\{\p_t v_0-\LL v_0,\;v_0-v_1+(1+K),\;v_0-v_{-1}-(1-K)\Big\}=0,\\[2mm]
\min\Big\{\p_t v_1-\LL v_1,\;v_1-v_0-(1-K)\Big\}=0,\\[2mm]
\min\Big\{\p_t v_{-1}-\LL v_{-1},\;v_{-1}-v_0+(1+K)\Big\}=0, \qquad (p,t)\in\Omega,
\end{array}
\right.
\end{align}
with the initial conditions
\begin{align}\label{v_0}
\left\{
\begin{array}{ll}
v_0(p,0)=0,\\[2mm]
v_1(p,0)=1-K,\\[2mm]
v_{-1}(p,0)=-(1+K),\qquad 0<p<1,
\end{array}
\right.
\end{align}
where
\[\Omega=(0,1)\times(0,T],\]
and the operator $\LL$ is defined by
\begin{align*}
\LL &=\frac{1}{2}\left(\frac{(\mu_1-\mu_2)p(1-p)}{\si}\right)^2\p_{pp}\\[2mm]
&\quad\;+\big(-(\la_1+\la_2)p+\la_2+(\mu_1-\mu_2)p(1-p)\big)\p_p+(\mu_1-\mu_2)p+\mu_2-\rho.
\end{align*}
These parameters, including the percentage of transaction fee $K$, the expected return rates of the stock $\mu_1$ and $\mu_{2}$ in the different market statues, the volatility of the stock $\si$, the discount rate of the investor $\rho$, and the coefficients of transition probabilities of the market states $\la_1$ and $\la_2$, are all constants and satisfy
\[0<K<1, \quad \mu_1>\rho>\mu_{2},\quad \si,\; \la_1,\; \la_2>0.\]

Note that the operator $\LL$ is degenerate on the boundaries $p = 0$ and $p=1$.
According to Fichera's theorem (see \cite{OR73}), we must not put the boundary condition on these two boundaries.

Partial differental equation (PDE) technologies are widely used in the economic literature to study similar regime-switching models. For instance, Yi \cite{Yi08} considers
the pricing problem of American put option with regime-switching volatility and its related excising region; Khaliq, Kleefeld and Liu \cite{KK13} studies the numerical solution of a class of complex PDE systems about American option with multiple states regime-switching.
Dai and Zhang et al. \cite{DZ11} not only give theoretical analysis on the variational inequalities, but also provide numerical simulations.

Dai and Zhang et al \cite{DZ10} consider the optimal stock trading rule with two net positions: the flat position (no stock holding) and the long position (holding one share of stock), the model can be expressed by a variational inequality system of the corresponding two value functions. As shown in that paper, the two variational inequalities can be reduced into one double obstacle problem on the difference of the two value functions, and then the one-dimensional problem and the properties of its free boundary can be obtained by well-known results on the double obstacle problem in the PDE literature.

Our model is an evolutionary problem of Dai and Zhang et al \cite{DZ10}, in which a third financially meaningful position - short position - is allowed. But it is not a trivial extension in the following sense. Because the stock are allowed to be short in our model, there is one extra (short) position than that of \cite{DZ10}. As a consequence, the Hamilton-Jacobi-Bellman equation (HJB) system involves three variational inequalities and three value functions, which could not be amalgamated into a single one as previous work \cite{DX10,DY09,DZ10} do, so  it calls for completely new technologies to deal with.
\par
Our model is also similar to Ngo and Pham \cite{NP16}. They consider the problem of determining the optimal cut-off of the pair trading rule for a three-state regime-switching model. However their model is infinity time horizon, so there is not time variable and the system is an ordinary differential equation system for which standard smooth-fitting technique can be applied and a closed-form solution is often available. By contrast, we consider a finite time model, so the system becomes a PDE system for which the existence and smoothness of the solution is much harder to establish.
\par
In this paper, we prove the VI system \eqref{v_eq} with the initial conditions \eqref{v_0} has four switching free boundaries and they are no-overlapping, monotonic and $C^{\infty}$-smooth, we also completely determine their relative localities and provide the optimal trading strategies for the stock trading problem.
The main contribution on mathematical method of this paper is that, the properties of the free boundaries of the variational inequality system with coupling appears in obstacle constraints are discussed and proved for the first time.
\par
The rest of the paper is arranged as follows. In Section 2, we first construct a penalty approximation system and obtain some estimations of its solution, then completely solve the VI system \eqref{v_eq} by a limit argument.
Section 3 is devoted to the study of the properties of the four switching free boundaries of the VI system \eqref{v_eq}.
In the appendix, we give the financial and stochastic background of the problem.



\section{Existence and uniqueness by penalty approximation method}\label{sec:Penalty}
\setcounter{equation}{0}

In this section, we use approximation method to study the problem \eqref{v_eq}. We will show the system has a unique solution.
\par
For this, we first construct an approximation equation system for the problem \eqref{v_eq}. For sufficiently small $\ep>0$, consider
\begin{align}\label{ve_eq}
\left\{
\begin{array}{ll}
\p_t v^\ep_0-\LL v^\ep_0+\beta_\ep(v^\ep_0-v^\ep_1+1+K)+\beta_\ep(v^\ep_0-v^\ep_{-1}-(1-K))=0,\\[2mm]
\p_t v^\ep_1-\LL v^\ep_1+\beta_\ep(v^\ep_1-v^\ep_0-(1-K))=0,\\[2mm]
\p_t v^\ep_{-1}-\LL v^\ep_{-1}+\beta_\ep(v^\ep_{-1}-v^\ep_0+(1+K))=0, \qquad (p,t)\in\Omega,
\end{array}
\right.
\end{align}
with the initial conditions
\begin{align}\label{ve_0}
\left\{
\begin{array}{ll}
v^\ep_0(p,0)=0,\\[2mm]
v^\ep_1(p,0)=1-K,\\[2mm]
v^\ep_{-1}(p,0)=-(1+K),\qquad 0<p<1;
\end{array}
\right.
\end{align}
where $\beta _{\ep }(\cdot)$ is any penalty function satisfying the following properties
\begin{eqnarray*}
&&\beta _{\ep }(\cdot)\in C^{2}(-\infty,+\infty ), \quad\beta _{\ep}(0)=-c_0, \quad\beta _{\ep}(\ep)=-c_1, \quad\beta _{\ep}(x)=0\; \text{ for } x\geq 2\ep, \\[2mm]
&& \beta _{\ep
}(\cdot)\leq 0, \quad \beta _{\ep }^{\prime }(\cdot)\geq 0,\quad \beta _{\ep
}^{\prime \prime }(\cdot)\leq 0, \quad\lim\limits_{\ep \rightarrow 0} \beta _{\ep }(x)=\left\{
\begin{array}{ll}
0, & x>0,\vspace{2mm} \\
-\infty, & x<0,
\end{array}
\right.
\end{eqnarray*}%
and $c_0>c_1>0$ are two constants that are independent of $\ep$ and will be determined latter. \figref{fig1} demonstrates such a penalty function $\beta _{\ep }(\cdot)$.
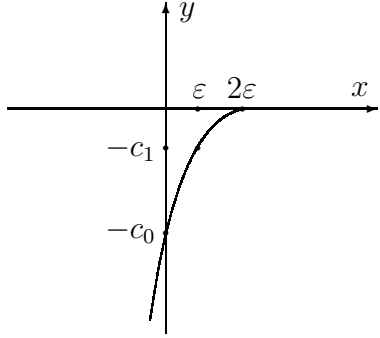
\begin{figure}[H]
\begin{center}
\begin{picture}(250,140)
\put(50,80){\vector(1,0){140}}
\put(110,-5){\vector(0,1){125}}
\qbezier(104,0)(115,80)(142,80)
\put(180,85){$x$}
\put(115,115){$y$}
\put(120,84){$\ep$}
\put(133,84){$2\ep$}
\put(87,31){$-c_0$}
\put(87,62){$-c_1$}
\put(110,33){\circle*{2}}
\put(122,80){\circle*{2}}
\put(122,65){\circle*{2}}
\put(110,65){\circle*{2}}
\put(139,80){\circle*{2}}
\end{picture} \caption{A penalty function $\beta_\ep (\cdot)$.} \label{fig1}
\end{center}
\end{figure}

We first, using the technique of Dai and Yi \cite{DY09}, reduce the 3-dimension problem \eqref{ve_eq} to a 2-dimension problem. Let
\[
u^\ep_1=v^\ep_0-v^\ep_1,\quad u^\ep_{-1}=v^\ep_0-v^\ep_{-1}.
\]
By \eqref{ve_eq} and \eqref{ve_0}, we have
\begin{align}\label{ue_eq}
\left\{
\begin{array}{ll}
\p_t u^\ep_1-\LL u^\ep_1+\beta_\ep(u^\ep_1+1+K)-\beta_\ep(-u^\ep_1-(1-K))+\beta_\ep(u^\ep_{-1}-(1-K))=0,\\[2mm]
\p_t u^\ep_{-1}-\LL u^\ep_{-1}+\beta_\ep(u^\ep_{-1}-(1-K))-\beta_\ep(-u^\ep_{-1}+(1+K))+\beta_\ep(u^\ep_1+1+K)=0,
\end{array}
\right.
\end{align}
and
\begin{align}\label{ue_0}
\left\{
\begin{array}{ll}
u^\ep_1(p,0)=-(1-K),\\[2mm]
u^\ep_{-1}(p,0)=1+K,\qquad 0<p<1.
\end{array}
\right.
\end{align}
Since the operator $\LL$ is degenerated on the boundaries $p = 0$ and $p=1$, we first consider the problem \eqref{ue_eq} in the bounded domain
\[\Omega_\ep:=(\ep,1-\ep)\times(0,T],\]
with the initial conditions \eqref{ue_0} and the additional boundary conditions
\begin{align}\label{ue_bc}
\left\{
\begin{array}{ll}
\p_p u^\ep_1(\ep,t)=\p_p u^\ep_1(1-\ep,t)=0,\\[2mm]
\p_p u^\ep_{-1}(\ep,t)=\p_p u^\ep_{-1}(1-\ep,t)=0,\qquad 0\leq t<T.
\end{array}
\right.
\end{align}

We now provide the following existence and uniqueness results for the approximation problem \eqref{ue_eq}.
\begin{lemma}\label{thm:ue}
The system \eqref{ue_eq}, restricted to the domain $\Omega_\ep$, with the initial conditions \eqref{ue_0} and boundary conditions \eqref{ue_bc}, admits a unique solution $(u^\ep_1,u^\ep_{-1})\in C^{2,1}(\overline{\Omega}_\ep)\times C^{2,1}(\overline{\Omega}_\ep)$. Moreover, the solution satisfies
\begin{align}\label{ue1}
-(1+K)+\ep\leq &u^\ep_1\leq -(1-K),\\
1-K+\ep\leq &u^\ep_{-1}\leq 1+K, \label{ue-1}
\end{align}
in $\overline{\Omega}_\ep$.
\end{lemma}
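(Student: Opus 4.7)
My plan splits Lemma~\ref{thm:ue} into two parts: existence/uniqueness of a classical solution on the uniformly parabolic bounded domain $\Omega_\ep$, and the a priori bounds via a coupled maximum principle argument.

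For existence and uniqueness: on $\overline{\Omega}_\ep$ the operator $\p_t-\LL$ is uniformly parabolic, since the diffusion coefficient $\tfrac12((\mu_1-\mu_2)p(1-p)/\sigma)^2$ is bounded below by a positive constant on $[\ep,1-\ep]$, and since $\beta_\ep\in C^2(\R)$ is bounded with bounded derivative, the penalty source in \eqref{ue_eq} is globally Lipschitz in $(u^\ep_1,u^\ep_{-1})$. The idea is a Banach fixed-point construction: freeze $(u^\ep_1,u^\ep_{-1})$ inside each $\beta_\ep(\cdot)$, solve the two resulting uncoupled linear parabolic IBVPs with Neumann data \eqref{ue_bc} by standard Schauder theory, and show that the induced map is a contraction on a short time slab $[0,\tau]$. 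Iterating over $[0,T]$ and bootstrapping the Schauder estimates deliver a unique classical solution in $C^{2,1}(\overline{\Omega}_\ep)\times C^{2,1}(\overline{\Omega}_\ep)$.

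For the bounds, I introduce the four margin functions
\[Z_1=u^\ep_1+(1-K),\quad Z_2=u^\ep_{-1}-(1+K),\quad Z_3=-(1+K)+\ep-u^\ep_1,\quad Z_4=(1-K+\ep)-u^\ep_{-1},\]
so that \eqref{ue1}--\eqref{ue-1} are equivalent to $Z_i\leq 0$ on $\overline{\Omega}_\ep$ for every $i$. Initially $Z_1=Z_2=0$ and $Z_3=Z_4=\ep-2K<0$. Assuming for contradiction that $\max_i Z_i$ first exceeds $0$ at some $(p_0,t_0)$ with some index $i_0$, I apply the maximum principle to $Z_{i_0}$ (boundary case handled by a Taylor expansion using the Neumann condition, interior case directly): at $(p_0,t_0)$ one has $Z_{i_0}=0$, $\p_t Z_{i_0}\geq 0$, $\p_p Z_{i_0}=0$, and $\p_{pp}Z_{i_0}$ of the appropriate sign. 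Substituting the special values $\beta_\ep(0)=-c_0$, $\beta_\ep(\ep)=-c_1$, and $\beta_\ep(2K)=\beta_\ep(2K-\ep)=0$ (valid for all small $\ep$), together with the monotonicity $\beta'_\ep\geq 0$ and the provisional bounds $Z_j\leq 0$ for $j\neq i_0$, the corresponding equation in \eqref{ue_eq} collapses to a scalar inequality whose only free term is the zero-order coefficient $a(p):=(\mu_1-\mu_2)p+\mu_2-\rho$ of $\LL$. Each of the four cases then fails provided $c_1$ and $c_0-c_1$ are chosen strictly larger than $\max\{(\rho-\mu_2)(1-K),\,(\mu_1-\rho)(1+K)\}$, which is compatible with the requirement $c_0>c_1>0$.

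The main obstacle is the coupling of the two equations through the penalty terms: the bound on $u^\ep_1$ depends on that of $u^\ep_{-1}$ and vice versa, so the four inequalities cannot be obtained one at a time; the joint margin function $\max_iZ_i$ combined with the first-violation-time argument is the key device. A further subtlety is that $a(p)$ changes sign on $(0,1)$ since $\mu_1>\rho>\mu_2$, so no constant sub- or super-solution works directly; the penalty gap $c_0-c_1$ is precisely what dominates $|a(p)|$ uniformly in $p$ and closes the estimate.
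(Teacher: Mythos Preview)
Your proof is essentially correct, but the route you take for the a priori bounds differs from the paper's, and one of your motivating claims is inaccurate.

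For existence and uniqueness, your contraction/Banach fixed-point argument is a minor variant of the paper's Schauder fixed-point plus comparison-principle sketch; both are standard on the uniformly parabolic domain $\Omega_\ep$.

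For the bounds \eqref{ue1}--\eqref{ue-1}, the paper does \emph{not} use a joint first-violation argument. Instead it proceeds sequentially with constant barriers and the scalar comparison principle. The key observation you missed is that the cross-coupling penalty terms $\beta_\ep(u^\ep_{-1}-(1-K))$ and $\beta_\ep(u^\ep_1+1+K)$ are always $\leq 0$, so when testing a constant \emph{sub}solution for $u^\ep_1$ (respectively $u^\ep_{-1}$) these terms can simply be dropped. This gives the two lower bounds $u^\ep_1\geq -(1+K)+\ep$ and $u^\ep_{-1}\geq 1-K+\ep$ independently, using only the size of $c_1$. With the lower bounds in hand, the coupling terms are now bounded below by $\beta_\ep(\ep)=-c_1$, and the two upper bounds follow from constant supersolutions, using the size of $c_0-c_1$. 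So your statement that ``the four inequalities cannot be obtained one at a time'' is incorrect: the paper obtains them in exactly that way, in the order lower-then-upper.

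Your simultaneous approach via $\max_i Z_i$ also works (the four case computations you outline do close under the stated constraints on $c_1$ and $c_0-c_1$, and the Neumann boundary case is indeed handled by the Taylor argument you indicate). Its advantage is that it is a single unified argument; the paper's advantage is that it reduces everything to the standard scalar comparison principle and makes the choice of $c_0,c_1$ transparent. One minor point: since $Z_1=Z_2=0$ identically at $t=0$, your ``first exceeds $0$'' formulation needs a small perturbation or the observation that $\partial_t Z_1,\partial_t Z_2<0$ at $t=0$ to ensure the first-violation time is strictly positive; this is routine but should be stated.
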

\begin{proof}
The existence and uniqueness can be proved by Schauder fix point theorem and the comparison principle; as the process is standard, we leave it to the interested readers.
\par
We come to prove the first inequality in \eqref{ue1}. Denote
\[
\phi=-(1+K)+\ep,
\]
and choose $c_1=(\mu_1-\mu_{2})(1+K)+1>0$. Then by simple calculations,
\begin{align*}
&\quad\;\p_t \phi-\LL \phi+\beta_\ep(\phi+1+K)-\beta_\ep(-\phi-(1-K))+\beta_\ep(u^\ep_{-1}-(1-K))\\
&=[(\mu_1-\mu_2)p+\mu_2-\rho][1+K-\ep]+\beta_\ep(\ep)-\beta_\ep(2K-\ep)+\beta_\ep(u^\ep_{-1}-(1-K))\\
&=[(\mu_1-\mu_2)p+\mu_2-\rho][1+K-\ep]-c_1+\beta_\ep(u^\ep_{-1}-(1-K))\\
&\leq [(\mu_1-\mu_2)+\mu_2-\mu_2][1+K-\ep]-((\mu_1-\mu_{2})(1+K)+1)\\
&< 0,
\end{align*}
so by \eqref{ue_eq},
\begin{align*}
&\quad\;\p_t \phi-\LL \phi+\beta_\ep(\phi+1+K)-\beta_\ep(-\phi-(1-K))\\
&<\p_t u^\ep_1-\LL u^\ep_1+\beta_\ep(u^\ep_1+1+K)-\beta_\ep(-u^\ep_1-(1-K)).
\end{align*}
By the comparison principle, we have $u^\ep_1\geq \phi =-(1+K)+\ep$, proving the first inequality in \eqref{ue1}. Similarly, using $c_1\geq (\rho-\mu_2)(1-K+\ep)$, we can prove $ 1-K+\ep \leq u^\ep_{-1}$ in \eqref{ue-1}.
\par
Now, we prove the second inequality in \eqref{ue1}.
Denote
\[
\Phi=-(1-K),
\]
and set $c_0=c_1+(\mu_{1}-\mu_2)(1+K)+1>c_{1}$. Then
\begin{align*}
&\quad\;\p_t \Phi-\LL \Phi+\beta_\ep(\Phi+1+K)-\beta_\ep(-\Phi-(1-K))+\beta_\ep(u^\ep_{-1}-(1-K))\\
&=[(\mu_1-\mu_2)p+\mu_2-\rho](1-K)+\beta_\ep(2K)-\beta_\ep(0)+\beta_\ep(u^\ep_{-1}-(1-K))\\
&=[(\mu_1-\mu_2)p+\mu_2-\rho](1-K)+c_0+\beta_\ep(u^\ep_{-1}-(1-K))\\
&\geq (\mu_2-\rho)(1-K)+c_0+\beta_\ep(1-K+\ep-(1-K))\\
&= (\mu_2-\rho)(1-K)+c_0-c_1\\
&> 0,
\end{align*}
where the inequality is due to $\beta'\geq 0$ and the first inequality  in \eqref{ue-1}.
It follows from \eqref{ue_eq} that
\begin{align*}
&\quad\;\p_t \Phi-\LL \Phi+\beta_\ep(\Phi+1+K)-\beta_\ep(-\Phi-(1-K))\\
&>\p_t u^\ep_1-\LL u^\ep_1+\beta_\ep(u^\ep_1+1+K)-\beta_\ep(-u^\ep_1-(1-K)).
\end{align*}
By the comparison principle, we have $u^\ep_1\leq \Phi=-(1-K)$, giving the second inequality in \eqref{ue1}.
Similarly, using $c_0\geq c_1+ (\mu_1-\rho)(1+K)$, we can prove $ u^\ep_{-1}\leq 1+K$ in \eqref{ue-1}.
\end{proof}

\begin{lemma}\label{pro:uetp}
Let $(u^\ep_1,u^\ep_{-1})$ be given in \lemref{thm:ue}, we have
\begin{align}\label{uep}
&\p_pu^\ep_1\leq 0,\quad \p_p u^\ep_{-1}\geq 0.
\end{align}
\end{lemma}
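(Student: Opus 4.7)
The plan is to differentiate the penalised system \eqref{ue_eq} with respect to $p$ and establish the signs of $w_1:=\p_p u^\ep_1$ and $w_{-1}:=\p_p u^\ep_{-1}$ via a weak maximum principle for the resulting weakly-coupled (cooperative) parabolic system. Since the coefficients of $\LL$, the penalty $\beta_\ep$ and the initial data are all sufficiently smooth, interior parabolic regularity justifies differentiating \eqref{ue_eq} in $p$, so that $w_1,w_{-1}$ are classical solutions of well-posed parabolic equations on $\Omega_\ep$.

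A direct commutator computation gives
\[
\p_p(\LL u)-\LL(\p_p u)=a(p)a'(p)\,\p_{pp}u+b'(p)\,\p_p u+c'(p)\,u,
\]
where $a,b,c$ denote the coefficients of $\LL$. Because $\p_{pp}u=\p_p w$, the first term is a first-order operator in $w$; setting $\tilde\LL := \LL+aa'\p_p$, differentiating \eqref{ue_eq} in $p$ yields
\begin{align*}
\p_t w_1-\tilde\LL w_1+A_1 w_1+A_2 w_{-1}&=c'(p)\,u^\ep_1,\\
\p_t w_{-1}-\tilde\LL w_{-1}+B_1 w_{-1}+B_2 w_1&=c'(p)\,u^\ep_{-1},
\end{align*}
with $A_2=\beta_\ep'(u^\ep_{-1}-(1-K))\ge 0$, $B_2=\beta_\ep'(u^\ep_1+1+K)\ge 0$, and $A_1,B_1$ bounded. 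Using $c'(p)=\mu_1-\mu_2>0$ together with the one-sided bounds $u^\ep_1\le-(1-K)<0$ and $u^\ep_{-1}\ge 1-K+\ep>0$ from \lemref{thm:ue}, the two right-hand sides are strictly negative and strictly positive, respectively.

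Put $W_1:=-w_1$ and $W_{-1}:=w_{-1}$; the target inequalities become $W_1,W_{-1}\ge 0$, and the system transforms into
\begin{align*}
\p_t W_1-\tilde\LL W_1+A_1 W_1-A_2 W_{-1}&=-c'(p)\,u^\ep_1 > 0,\\
\p_t W_{-1}-\tilde\LL W_{-1}+B_1 W_{-1}-B_2 W_1&=c'(p)\,u^\ep_{-1} > 0,
\end{align*}
whose off-diagonal coefficients $-A_2,-B_2$ are non-positive, giving the cooperative structure needed for a comparison argument on the pair. On the parabolic boundary of $\Omega_\ep$ both unknowns vanish: $W_1=W_{-1}=0$ at $t=0$ because the data \eqref{ue_0} are constant in $p$, and $W_1=W_{-1}=0$ at $p=\ep,1-\ep$ by the Neumann conditions \eqref{ue_bc}.

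With these ingredients, a standard argument closes the proof: assuming that $\min_{\overline\Omega_\ep}\min(W_1,W_{-1})e^{-\lambda t}<0$ for $\lambda$ large enough to dominate the zeroth-order coefficient inside $\tilde\LL$ together with $A_1,B_1$ and $A_2,B_2$, this minimum is attained at an interior or top-boundary point $(p^*,t^*)$ where (after relabelling if necessary) $W_1(p^*,t^*)e^{-\lambda t^*}$ realises it, so that $\p_t W_1\le\lambda W_1$, $\p_p W_1=0$, $\p_{pp}W_1\ge 0$, and $W_{-1}(p^*,t^*)\ge W_1(p^*,t^*)<0$. Substituting these into the first equation and using $A_2\ge 0$ makes the left-hand side strictly negative while the right-hand side is strictly positive, a contradiction. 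The main obstacle is precisely the coupling between the two unknowns through the penalty terms: a naive univariate maximum principle cannot be applied to either equation in isolation, and the argument succeeds only because the special sign pattern generated by the penalty ($\beta_\ep'\ge 0$), together with the two-sided bounds of \lemref{thm:ue}, simultaneously delivers the cooperative structure and the correct signs of the source terms.
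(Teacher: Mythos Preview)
Your proof is correct and follows essentially the same route as the paper: differentiate the penalised system in $p$, exploit the strict sign of the source terms coming from $(\mu_1-\mu_2)u^\ep_1<0$ and $(\mu_1-\mu_2)u^\ep_{-1}>0$, and run a maximum-principle argument with an $e^{-\lambda t}$ weight on the coupled pair. The only cosmetic difference is that you flip the sign of $w_1$ to cast the system in the standard cooperative form and argue with a joint minimum, whereas the paper keeps the original signs and compares $\max W_1$ against $-\min W_{-1}$; both arguments hinge on the same observation that the off-diagonal couplings $\beta_\ep'(\cdot)\ge 0$ have the right sign and are bounded for fixed $\ep$, so $\lambda$ can absorb them.
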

\begin{proof}
Denote $w_i=\p_p u^\ep_i$, for $i=1$, $-1$. After differentiating \eqref{ue_eq} w.r.t. $p$ and we get
\begin{align}\label{w_eq}
\left\{
\begin{array}{ll}
\p_t w_1-\TT w_1-(\mu_1-\mu_2)u^\ep_1+\beta_\ep'(u^\ep_1+1+K)w_1\\[2mm]
\qquad\qquad +\beta_\ep'(-u^\ep_1-(1-K))w_1+\beta_\ep'(u^\ep_{-1}-(1-K))w_{-1}=0,\\[2mm]
\p_t w_{-1}-\TT w_{-1}-(\mu_1-\mu_2)u^\ep_{-1}+\beta_\ep'(u^\ep_{-1}-(1-K))w_{-1}\\[2mm]
\qquad\qquad\quad +\beta_\ep'(-u^\ep_{-1}+(1+K))w_{-1}+\beta_\ep'(u^\ep_1+1+K)w_1=0,
\end{array}
\right.
\end{align}
where the operator $\TT$ is defined by
\begin{align*}
\TT&:=\LL+\frac{(\mu_1-\mu_2)^2}{\si^2}p(1-p)(1-2p)\p_p+[-(\la_1+\la_2)+(\mu_1-\mu_2)(1-2p)]\\
&=\frac{1}{2}\left(\frac{(\mu_1-\mu_2)p(1-p)}{\si}\right)^2\p_{pp}\\[2mm]
&\quad\;+\left(-(\la_1+\la_2)p+\la_2+(\mu_1-\mu_2)p(1-p)+\frac{(\mu_1-\mu_2)^2}{\si^2}p(1-p)(1-2p)\right)\p_p\\
&\quad\;+\mu_2-\rho-(\la_1+\la_2)+(\mu_1-\mu_2)(1-p).
\end{align*}

Define
\[
W_i=e^{-\la t}w_i,\quad i=1,-1,
\]
where $\la$ is a constant that will be determined latter. It sufficies to prove
\begin{align}\label{W<0}
W_1\leq 0,\quad W_{-1}\geq 0.
\end{align}
Using $u^\ep_1<0$, $u^\ep_{-1}>0$ and $\mu_1>\mu_2$, we get from \eqref{w_eq} that
\begin{align}\label{W_eq}
\left\{
\begin{array}{l}
\p_t W_1-\TT W_1+\la W_1+\beta_\ep'(u^\ep_1+1+K)W_1\\[2mm]
\qquad\quad +\beta_\ep'(-u^\ep_1-(1-K))W_1+\beta_\ep'(u^\ep_{-1}-(1-K))W_{-1}<0,\\[2mm]
\p_t W_{-1}-\TT W_{-1}+\la W_{-1}+\beta_\ep'(u^\ep_{-1}-(1-K))W_{-1}\\[2mm]
\qquad\qquad +\beta_\ep'(-u^\ep_{-1}+(1+K))W_{-1}+\beta_\ep'(u^\ep_1+1+K)W_1>0.
\end{array}
\right.
\end{align}
Clearly, there exist $(p_1,t_1)$, $(p_{-1},t_{-1})\in\overline{\Omega}_\ep$ such that
\[
W_1(p_1,t_1)=\max\limits_{(p,t)\in\overline{\Omega}_\ep}W_1(p,t),\quad W_{-1}(p_{-1},t_{-1})=\min\limits_{(p,t)\in\overline{\Omega}_\ep}W_{-1}(p,t).
\]
Suppose \eqref{W<0} was not true, then we would have \[\max\{W_1(p_1,t_1),-W_{-1}(p_{-1},t_{-1})\}>0.\]
Without loss of generality, we may assume that
\[W_1(p_1,t_1)\geq -W_{-1}(p_{-1},t_{-1})\quad \text{ and }\quad W_1(p_1,t_1)>0.\]
Notice $W_1=0$ on $\p_p \Omega_\ep$ (the parabolic boundary of $\Omega_\ep$), so $(p_1,t_1)$ is inside the domain $\Omega_\ep$ or at the upper boundary of $\Omega_\ep$, and thus,
\[
\p_p W_1(p_1,t_1)=0,\quad \p_{pp} W_1(p_1,t_1)\leq0,\quad \p_t W_1(p_1,t_1)\geq0.
\]
Choosing $\la >\mu_2-\rho-(\la_1+\la_2)+(\mu_1-\mu_2)(1-p)+1$, the above, after simple calculation, implies that
\begin{align}\label{Teq}
\p_t W_1-\TT W_1+\left(\la-\beta_\ep'(0)\right) W_1\bigg|_{(p_1,t_1)}>0.
\end{align}
One the other hand, using $W_1(p_1,t_1)>0$, and $\beta'\geq 0$, the first inequality in \eqref{W_eq} leads to
\begin{align*}
&\quad\;\p_t W_1-\TT W_1+\la W_1\bigg|_{(p_1,t_1)}\\
&<-\beta_\ep'(u^\ep_1+1+K)W_1-\beta_\ep'(-u^\ep_1-(1-K))W_1-\beta_\ep'(u^\ep_{-1}-(1-K))W_{-1}\bigg|_{(p_1,t_1)}\\
&\leq -\beta_\ep'(u^\ep_{-1}-(1-K))W_{-1}\bigg|_{(p_1,t_1)},
\end{align*}
noticing that $-W_{-1}(p_{1},t_{1})\leq -W_{-1}(p_{-1},t_{-1})\leq W_1(p_1,t_1)$, $\beta''\leq 0$, and by \eqref{ue-1} $u^\ep_{-1}>1-K$, the above is
\begin{align*}
&\leq \beta_\ep'(u^\ep_{-1}-(1-K))W_1 \bigg|_{(p_1,t_1)} \leq\beta_\ep'(0)W_1\bigg|_{(p_1,t_1)},
\end{align*}
contradicting to \eqref{Teq}. Therefore, \eqref{W<0} holds true and the claim is proved.
\end{proof}

\begin{lemma}\label{lem:ue1ue-1}
Let $(u^\ep_1,u^\ep_{-1})$ be given in \lemref{thm:ue}, we have
\begin{align}\label{ue1ue-1}
&u^\ep_1 +u^\ep_{-1}\geq0.
\end{align}
\end{lemma}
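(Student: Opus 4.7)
The plan is to reduce to a scalar problem for $z := u^\ep_1 + u^\ep_{-1}$ and then invoke a parabolic maximum principle. Adding the two equations in \eqref{ue_eq} and abbreviating $\alpha := u^\ep_1 + 1+K$ and $\delta := u^\ep_{-1} - (1-K)$ (so that $z = \alpha + \delta - 2K$, $-u^\ep_1-(1-K) = 2K-\alpha$, and $-u^\ep_{-1}+(1+K) = 2K-\delta$), one obtains
\[
\p_t z - \LL z = F := -2\beta_\ep(\alpha) - 2\beta_\ep(\delta) + \beta_\ep(2K-\alpha) + \beta_\ep(2K-\delta).
\]
The initial data \eqref{ue_0} give $z(\cdot,0) = 2K > 0$, and \eqref{ue_bc} forces $\p_p z = 0$ on the lateral boundary $\{p = \ep\} \cup \{p = 1-\ep\}$.

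The crucial pointwise claim, at the heart of the argument, is that $F \geq 0$ wherever $z \leq 0$. Indeed, $z(p,t) \leq 0$ is equivalent to $\alpha + \delta \leq 2K$ at $(p,t)$, whence $2K - \alpha \geq \delta$ and $2K - \delta \geq \alpha$; since $\beta_\ep' \geq 0$ by assumption, this yields $\beta_\ep(2K-\alpha) \geq \beta_\ep(\delta)$ and $\beta_\ep(2K-\delta) \geq \beta_\ep(\alpha)$, so
\[
F \geq -\beta_\ep(\alpha) - \beta_\ep(\delta) \geq 0,
\]
the last step using $\beta_\ep \leq 0$.

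To conclude, I would pass to $\tilde z := e^{-\la t} z$ with $\la$ chosen larger than $\sup_{p\in[0,1]}[(\mu_1-\mu_2)p + \mu_2 - \rho]$, so that the transformed operator $\widetilde{\LL} := \LL - \la$ has a strictly negative zeroth-order coefficient; $\tilde z$ then solves $\p_t \tilde z - \widetilde{\LL} \tilde z = e^{-\la t} F$. If $\tilde z$ attained a negative minimum at some $(p_0, t_0) \in \overline{\Omega}_\ep$, then $t_0 > 0$ by the initial condition, and standard derivative tests---combined with \eqref{ue_bc} in the case $p_0 \in \{\ep, 1-\ep\}$---yield $\p_t \tilde z(p_0,t_0) \leq 0$, $\p_p \tilde z(p_0,t_0) = 0$, and $\p_{pp} \tilde z(p_0,t_0) \geq 0$. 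Combined with $\tilde z(p_0,t_0) < 0$ and the strict negativity of the new zeroth-order coefficient, these force $(\p_t \tilde z - \widetilde{\LL} \tilde z)(p_0,t_0) < 0$, contradicting $e^{-\la t_0} F(p_0,t_0) \geq 0$ from the pointwise claim.

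The main technical subtlety is applying the maximum principle at the lateral boundary $\{p=\ep\}\cup\{p=1-\ep\}$ under the Neumann condition \eqref{ue_bc}; this is routine given $u^\ep_{\pm 1} \in C^{2,1}(\overline{\Omega}_\ep)$, since a Taylor expansion at a boundary minimum with vanishing first derivative still yields the required sign on the second derivative. Apart from that and the exponential rescaling (already used in the proof of \lemref{pro:uetp}), all the content is in the scalar reduction $z = u^\ep_1 + u^\ep_{-1}$ and the monotonicity-plus-sign structure of $\beta_\ep$.
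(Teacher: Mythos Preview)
Your proof is correct and follows essentially the same approach as the paper: form the scalar quantity $U=u^\ep_1+u^\ep_{-1}$, add the two equations, exploit the monotonicity and sign of $\beta_\ep$, and apply the maximum principle. The only cosmetic difference is that the paper uses the mean value theorem to write $\beta_\ep(\alpha)-\beta_\ep(2K-\delta)=\beta_\ep'(\xi)U$ and $\beta_\ep(\delta)-\beta_\ep(2K-\alpha)=\beta_\ep'(\eta)U$, thereby absorbing these terms into the linear operator and obtaining $\p_tU-\LL U+[\beta_\ep'(\xi)+\beta_\ep'(\eta)]U\geq0$ globally, whereas you verify $F\geq0$ directly at any point where $z\leq0$; your exponential rescaling is also made explicit where the paper simply invokes the maximum principle.
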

\begin{proof}
Denote $U=u^\ep_1 +u^\ep_{-1}$.
Adding up the two equations in \eqref{ue_eq}, we have
\begin{align*}
&\p_t U-\LL U+\big[\beta_\ep(u^\ep_1+1+K)-\beta_\ep(-u^\ep_{-1}+(1+K))\big]\\
+&\big[\beta_\ep(u^\ep_{-1}-(1-K))-\beta_\ep(-u^\ep_1-(1-K))\big]
+\beta_\ep(u^\ep_1+1+K)+\beta_\ep(u^\ep_{-1}-(1-K))=0.
\end{align*}
Using the mean value theorem, there exit $\xi(p,t)$ between $u^\ep_1+1+K$ and $-u^\ep_{-1}+1+K$, $\eta(p,t)$ between $u^\ep_{-1}-(1-K)$ and $-u^\ep_{-1}+(1+K)$, such that
\begin{align*}
\beta_\ep(u^\ep_1+1+K)-\beta_\ep(-u^\ep_{-1}+(1+K))=\beta_\ep'(\xi)U,\\ \beta_\ep(u^\ep_{-1}-(1-K))-\beta_\ep(-u^\ep_1-(1-K))=\beta_\ep'(\eta)U,
\end{align*}
thus,
\begin{align*}
\p_t U-\LL U+\beta_\ep'(\xi)U+\beta_\ep'(\eta)U=-\beta_\ep(u^\ep_1+1+K)-\beta_\ep(u^\ep_{-1}-(1-K))\geq 0,
\end{align*}
Using the maximum principle we get \eqref{ue1ue-1}.
\end{proof}

\begin{proposition}\label{lem:ve}
The system \eqref{ve_eq} with the initial conditions \eqref{ve_0} and the following boundary conditions
\begin{align}\label{ve_bc}
\left\{
\begin{array}{ll}
\p_p v^\ep_0(\ep,t)=\p_p v^\ep_0(1-\ep,t)=0,\\[2mm]
\p_p v^\ep_1(\ep,t)=\p_p v^\ep_1(1-\ep,t)=0,\\[2mm]
\p_p v^\ep_{-1}(\ep,t)=\p_p v^\ep_{-1}(1-\ep,t)=0,
\end{array}
\right.
\end{align}
has a unique solution $(v^\ep_0,v^\ep_1,v^\ep_{-1})$ with $v^\ep_i\in C^{2,1}(\overline{\Omega}_\ep)$, $i=0,1,-1$.
\end{proposition}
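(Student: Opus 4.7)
The plan is to exploit the reduction already carried out in \lemref{thm:ue}. Since $(u^\ep_1, u^\ep_{-1})$ has been shown to exist uniquely in $C^{2,1}(\overline{\Omega}_\ep) \times C^{2,1}(\overline{\Omega}_\ep)$, I would treat $u^\ep_1 = v^\ep_0 - v^\ep_1$ and $u^\ep_{-1} = v^\ep_0 - v^\ep_{-1}$ as known data and solve for the single component $v^\ep_0$. Substituting these known functions into the first equation of \eqref{ve_eq} yields the linear parabolic equation
\[
\p_t v^\ep_0 - \LL v^\ep_0 = -\beta_\ep(u^\ep_1 + 1 + K) - \beta_\ep(u^\ep_{-1} - (1-K)),
\]
whose right-hand side is a known $C^{2,1}(\overline{\Omega}_\ep)$ source term, with no dependence on $v^\ep_0$ left on the right.

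On the truncated domain $\Omega_\ep$ the principal coefficient $\frac{1}{2}\big(\frac{(\mu_1-\mu_2)p(1-p)}{\si}\big)^2$ of $\p_{pp}$ is bounded below by $\frac{1}{2}\big(\frac{(\mu_1-\mu_2)\ep(1-\ep)}{\si}\big)^2 > 0$, so this linear problem is uniformly parabolic with smooth coefficients. Standard linear parabolic theory (Schauder estimates combined with the Neumann data in \eqref{ve_bc} and the initial condition $v^\ep_0(p,0) = 0$) then delivers a unique $v^\ep_0 \in C^{2,1}(\overline{\Omega}_\ep)$.

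I would then \emph{define} $v^\ep_1 := v^\ep_0 - u^\ep_1$ and $v^\ep_{-1} := v^\ep_0 - u^\ep_{-1}$ and check that the triple solves the original system. The initial values match because of \eqref{ue_0}, the Neumann conditions for $v^\ep_{\pm 1}$ follow from those for $v^\ep_0$ in \eqref{ve_bc} together with \eqref{ue_bc}, and the second and third equations in \eqref{ve_eq} are obtained by subtracting the respective equations in \eqref{ue_eq} from the equation for $v^\ep_0$ derived above; the cross terms $\beta_\ep(u^\ep_1 + 1 + K)$ and $\beta_\ep(u^\ep_{-1}-(1-K))$ cancel exactly, leaving only the single penalty term required in each line of \eqref{ve_eq}.

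For uniqueness, suppose $(\widetilde v^\ep_0, \widetilde v^\ep_1, \widetilde v^\ep_{-1})$ is another solution; then $\widetilde u^\ep_1 := \widetilde v^\ep_0 - \widetilde v^\ep_1$ and $\widetilde u^\ep_{-1} := \widetilde v^\ep_0 - \widetilde v^\ep_{-1}$ satisfy \eqref{ue_eq}--\eqref{ue_bc} with the same data, so by the uniqueness part of \lemref{thm:ue} they coincide with $(u^\ep_1, u^\ep_{-1})$; the component $\widetilde v^\ep_0$ then solves the same linear parabolic problem as $v^\ep_0$ and must equal it, forcing $\widetilde v^\ep_i = v^\ep_i$ for $i = \pm 1$ as well. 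I do not anticipate any real obstacle in this argument, because the genuinely nonlinear coupling through the penalty terms has already been handled at the $(u^\ep_1, u^\ep_{-1})$ level in \lemref{thm:ue}; what remains is a decoupled linear problem on a non-degenerate parabolic domain, which is routine.
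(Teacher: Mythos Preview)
Your proposal is correct and follows essentially the same approach as the paper: both solve the linear parabolic problem for $v^\ep_0$ with source term $-\beta_\ep(u^\ep_1+1+K)-\beta_\ep(u^\ep_{-1}-(1-K))$ and then define $v^\ep_{\pm 1}=v^\ep_0-u^\ep_{\pm 1}$. Your write-up is in fact more detailed than the paper's (which omits the uniform parabolicity remark and the verification step), and your uniqueness argument via \lemref{thm:ue} is a clean alternative to the paper's one-line appeal to the comparison principle.
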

\begin{proof}
Let $(u^\ep_1,u^\ep_{-1})$ be given in \lemref{thm:ue}. Then the following linear problem of $v^\ep_0$,
\begin{align*}
\left\{
\begin{array}{ll}
\p_t v^\ep_0-\LL v^\ep_0+\beta_\ep(u^\ep_1+1+K)+\beta_\ep(u^\ep_{-1}-(1-K))=0, \\[2mm]
\p_p v^\ep_0(\ep,t)=0,\quad \p_p v^\ep_0(1-\ep,t)=0, \quad v^\ep_0(p,0)=0, \qquad (p,t)\in\Omega_{\ep},
\end{array}
\right.
\end{align*}
has a unique solution $v^\ep_0\in C^{2,1}(\overline{\Omega}_\ep)$. Let $v^\ep_1=v^\ep_0-u^\ep_1$ and $v^\ep_{-1}=v^\ep_0-u^\ep_{-1}$, then it is easy to verify that $(v^\ep_0,v^\ep_1,v^\ep_{-1})$ is a solution to the system \eqref{ve_eq} with the initial conditions \eqref{ve_0} and the boundary conditions \eqref{ve_bc}.The uniqueness can be proved easily by the comparison principle.
\end{proof}

Our first main result, which completely characterizes the solution of the system \eqref{v_eq}, is given below.
\begin{theorem}\label{thm:v}
The variational inequality system \eqref{v_eq} with the initial conditions \eqref{v_0}
has a unique solution $(v_0,v_1,v_{-1})$ such that $v_i\in C^{1+\al,\frac{1+\al}{2}}(\Omega\bigcup\{t=0\})\bigcap W^{2,1}_{q,\mathrm{loc}}(\Omega\bigcup\{t=0\})$, for any $q>3$, $i=0,1,-1$ and $\al=1-3/q$. Furthermore,
\begin{align}\label{u1}
-(1+K)\leq &v_0-v_1\leq -(1-K),\\\label{u-1}
1-K\leq &v_0-v_{-1}\leq 1+K,\\\label{up}
\p_p(v_0-v_1)\leq& 0,\quad \p_p (v_0-v_{-1})\geq 0.\\\label{u1-1}
v_0-v_1 &+v_0-v_{-1}\geq0.
\end{align}
\end{theorem}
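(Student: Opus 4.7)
The plan is to construct the solution $(v_0, v_1, v_{-1})$ as the limit, as $\ep \to 0$, of the penalty approximations $(v^\ep_0, v^\ep_1, v^\ep_{-1})$ supplied by \propref{lem:ve} on $\Omega_\ep$, and then to read off the inequalities \eqref{u1}--\eqref{u1-1} by passing to the limit in \lemref{thm:ue}, \lemref{pro:uetp}, and \lemref{lem:ue1ue-1}.

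The first step is to establish uniform-in-$\ep$ estimates. \lemref{thm:ue} already controls $v^\ep_0 - v^\ep_1$ and $v^\ep_0 - v^\ep_{-1}$ in $L^\infty$, so it suffices to produce a uniform $L^\infty$ bound on $v^\ep_0$; this follows from a direct maximum-principle argument applied to the first equation of \eqref{ve_eq}, using that $-c_0 \leq \beta_\ep \leq 0$ on the range of its arguments. Fix any $\Omega' \subset\subset \Omega \cup\{t=0\}$; then $\Omega' \subset \Omega_\ep$ for all sufficiently small $\ep$, and $\LL$ is uniformly parabolic on $\Omega'$. Hence standard interior $L^q$ parabolic estimates (e.g.\ Ladyzhenskaya--Solonnikov--Ural'ceva) give uniform $W^{2,1}_q(\Omega')$ bounds on $(v^\ep_0, v^\ep_1, v^\ep_{-1})$ for every $q > 3$, and by the parabolic Sobolev embedding uniform $C^{1+\al,(1+\al)/2}(\Omega')$ bounds with $\al = 1 - 3/q$.

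By a diagonal extraction I would then pick a subsequence $\ep_n \downarrow 0$ along which $v^{\ep_n}_i \to v_i$ weakly in $W^{2,1}_{q,\mathrm{loc}}$ and strongly in $C^{1+\al',(1+\al')/2}_{\mathrm{loc}}$ for any $\al' < \al$, and verify that the limit solves \eqref{v_eq}--\eqref{v_0}. Using $\beta_\ep \leq 0$ together with $\beta_\ep(x) \to 0$ for $x>0$ and $\beta_\ep(x) \to -\infty$ for $x<0$, each equation of \eqref{ve_eq} forces, in the limit, both $\p_t v_i - \LL v_i \geq 0$ and the obstacle inequalities $v_0 - v_1 \geq -(1+K)$, $v_0 - v_{-1} \geq 1-K$, $v_1 - v_0 \geq 1-K$, $v_{-1} - v_0 \geq -(1+K)$. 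The complementarity part comes from observing that on the interior of any set where a particular obstacle is strictly inactive, the corresponding $\beta_\ep$ term vanishes in the limit, giving $\p_t v_i - \LL v_i = 0$ there; this produces the min-structure of \eqref{v_eq}. The initial conditions \eqref{v_0} pass to the limit by $C^{1+\al',(1+\al')/2}$ convergence up to $\{t=0\}$, and \eqref{u1}--\eqref{u1-1} follow immediately from \lemref{thm:ue}, \lemref{pro:uetp}, \lemref{lem:ue1ue-1}. Uniqueness is then a standard comparison argument for coupled VI systems, applied coordinate-wise to the positive part of the difference of two solutions.

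The main obstacle I anticipate is the identification of the complementarity condition in the first component of \eqref{v_eq}: the penalty equation for $v^\ep_0$ carries \emph{two} penalty terms coupled to different obstacles, so to read off the min-structure one must split the limit into the three regions where either or both of $v_0 - v_1 + (1+K)$ and $v_0 - v_{-1} - (1-K)$ vanish, and check case-by-case that the non-vanishing $\beta_\ep$ contributions combine with $\p_t v^\ep_0 - \LL v^\ep_0$ to yield the announced equality. A secondary technical point is the degeneracy of $\LL$ at $p=0,1$; but since \eqref{v_eq} and the asserted regularity are local on $\Omega$, the Fichera condition recalled after \eqref{v_0} means no additional boundary analysis is required.
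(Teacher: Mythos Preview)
Your proposal is correct and follows essentially the same route as the paper: pass to the limit in the penalty approximations of \propref{lem:ve} using interior $W^{2,1}_q$ estimates and Sobolev embedding, then read off \eqref{u1}--\eqref{u1-1} from Lemmas \ref{thm:ue}--\ref{lem:ue1ue-1}. Your anticipated ``main obstacle'' for the first component of \eqref{v_eq} is in fact handled in one step---the min-structure only requires showing $\p_t v_0-\LL v_0=0$ wherever \emph{both} obstacles are strictly inactive, so no case-splitting into three regions is needed.
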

\begin{proof}
Let $(v^\ep_0,v^\ep_1,v^\ep_{-1})$ be given in \propref{lem:ve}.
Fix $r\in(0,1/2)$, apply $W^{2,1}_p$ interior estimate (see \cite{Li96}) to \eqref{ve_eq}, we have for any $q>3$,
\[
|v^\ep_0|_{W^{2,1}_q(\Omega_r)},\;|v^\ep_1|_{W^{2,1}_q(\Omega_r)},\;|v^\ep_{-1}|_{W^{2,1}_q(\Omega_r)} \leq C_r,
\]
where $C_r$ is independent of $\ep$. For any $r\in(0,1/2)$, $q>3$ and $i=-1,0,1$, using Sobolev Embedding Theorem, i.e., $W^{2,1}_q(\Omega_r)\subseteq\subseteq C^{1+\al,\frac{1+\al}{2}}(\overline{\Omega}_r)$, with $\al=1-3/q$, there exists a subsequence of $v^\ep_i \in C^{1+\al,\frac{1+\al}{2}}(\Omega\bigcup\{t=0\})\bigcap W^{2,1}_{q,\mathrm{loc}}(\Omega\bigcup\{t=0\})$, which we still denote by $v^\ep_i$, such that
\[
v^\ep_i\longrightarrow v_i\quad\hbox{uniformly in}\; C^{1+\al,\frac{1+\al}{2}}(\overline{\Omega}_r),\quad \hbox{weekly in} \; W^{2,1}_q(\Omega_r).
\]
By \eqref{ve_eq}, we have
\[\p_t v^\ep_i-\LL v^\ep_i\geq0\quad\hbox{in}\; \Omega.\]
Letting $\ep\rt0$, we have
\begin{align}\label{Lvi}
\p_t v_i-\LL v_i\geq0\quad\hbox{in}\; \Omega.
\end{align}

Moreover, from \eqref{ue1}, \eqref{ue-1}, \eqref{uep} and \eqref{ue1ue-1}, we obtain \eqref{u1}, \eqref{u-1}, \eqref{up} and \eqref{u1-1}, respectively.

\par
Now, we prove the first variational inequality in \eqref{v_eq}. From \eqref{Lvi}, \eqref{u1} and \eqref{u-1}
we obtain
\[
\min\Big\{\p_t v_0-\LL v_0,\;v_0-v_1+(1+K),\;v_0-v_{-1}-(1-K)\Big\}\geq0.
\]
In the following, we come to prove the equality holds.

Suppose $v_0-v_1>-(1+K)+3\ep_{0}$ and $v_0-v_{-1}>(1-K)+3\ep_{0}$ hold at some point $(p,t)\in \Omega\bigcup\{t=0\}$ for some $\ep_{0}>0$. By the continuity of $v_i$ and the uniformly convergence of $v^\ep_i$ in $C^{1+\al,\frac{1+\al}{2}}(\Omega\bigcup\{t=0\})$, we have
\[
(v^\ep_0-v^\ep_1)(p,t)>-(1+K)+2\ep_{0},\quad (v^\ep_0-v^\ep_{-1})(p,t)>(1-K)+2\ep_{0},
\]
for small enough $\ep>0$. Thus by the first equation in \eqref{ve_eq},
\[
(\p_t v^\ep_0-\LL v^\ep_0)(p,t)=0.
\]
Let $\ep\rt0$ we get $(\p_t v_0-\LL v_0)(p,t)=0$.
Therefore, we proved $(v_0,v_1,v_{-1})$ satisfies the first variational inequality in \eqref{v_eq}.
The other two variational inequalities in \eqref{v_eq} can be proved similarly.
\par
The proof of uniqueness is standard, we omit the details.
\end{proof}

From now on, let $(v_0,v_1,v_{-1})$, given in \thmref{thm:v}, denote the unique solution of the VI system \eqref{v_eq}.

\section{Free boundaries and optimal trading strategies }\label{sec:freeboundary}
\setcounter{equation}{0}
In this section, we study the free boundaries for the VI system \eqref{v_eq}, and provide the optimal trading strategies for the stock trading problem.
\par
Define the trading regions ${\cal S}_{i,j}$, switching from position $i$ to position $j$, $i\neq j \in\{0,-1,1\}$, as \begin{align*}
{\cal S}_{0,1}&=\{(p,t)\in\Omega\;|\;v_0-v_1+1+K=0\},\\[2mm]
{\cal S}_{0,-1}&=\{(p,t)\in\Omega\;|\;v_0-v_{-1}-(1-K)=0\},\\[2mm]
{\cal S}_{1,0}&=\{(p,t)\in\Omega\;|\;v_1-v_0-(1-K)=0\},\\[2mm]
{\cal S}_{-1,0}&=\{(p,t)\in\Omega\;|\;v_{-1}-v_0+1+K=0\}.
\end{align*}
Define the free trading boundaries for $0\leq t<T$ as
\begin{align*}
p_{0,1}(t)&=\inf\{p\;|\;(p,t)\in{\cal S}_{0,1}\},\\[2mm]
p_{-1,0}(t)&=\inf\{p\;|\;(p,t)\in{\cal S}_{-1,0}\}, \\[2mm]
p_{0,-1}(t)&=\sup\{p\;|\;(p,t)\in{\cal S}_{0,-1}\}, \\[2mm]
p_{1,0}(t)&=\sup\{p\;|\;(p,t)\in{\cal S}_{1,0}\}.
\end{align*}
Since $\p_p(v_0-v_1)\leq0$, $\p_p(v_0-v_1)\geq0$ in $\Omega$, we see that
\begin{align*}
{\cal S}_{0,1}&=\{(p,t)\in\Omega\;|\;p\geq p_{0,1}(t)\},\\[2mm]
{\cal S}_{-1,0}&=\{(p,t)\in\Omega\;|\;p\geq p_{-1,0}(t)\},\\[2mm]
{\cal S}_{0,-1}&=\{(p,t)\in\Omega\;|\;p\leq p_{0,-1}(t)\},\\[2mm]
{\cal S}_{1,0}&=\{(p,t)\in\Omega\;|\;p\leq p_{1,0}(t)\}.
\end{align*}
Denote
\[
p_0:=\frac{\rho-\mu_2}{\mu_1-\mu_2}\in (0,1).
\]

The second main result of this paper is the completely characterization of the free boundaries defined above.
\begin{theorem}\label{freeboundaries}
We have
\begin{align*}
p_{0,-1}(t),\quad p_{1,0}(t),\quad p_{-1,0}(t),\quad p_{0,1}(t)\in C^{\infty}((0,T]).
\end{align*}
The free boundaries $p_{0,-1}(t)$ and $p_{-1,0}(t)$ are strictly increasing, and $p_{0,1}(t)$ and $p_{1,0}(t)$ are strictly decreasing; and
they have no-overlapping,
\begin{align*}
0\leq p_{0,-1}(t)< p_{1,0}(t)< p_0< p_{-1,0}(t)< p_{0,1}(t)\leq 1,
\end{align*}
with
\begin{align*}
p_{1,0}(0+)=p_0,\quad p_{-1,0}(0+)=p_0.
\end{align*}
Moreover, there exists $t_1\geq\frac{1}{\mu_1-\rho}\log\frac{1+K}{1-K}$ such that $p_{0,1}(t)=1$ if $t\leq t_1$;
and $t_0\geq\frac{1}{\rho-\mu_2}\log\frac{1+K}{1-K}$, such that $p_{0,-}(t)=0$ if $t\leq t_0$.
\end{theorem}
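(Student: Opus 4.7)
The plan is to conduct the analysis through the two differences $u_1=v_0-v_1$ and $u_{-1}=v_0-v_{-1}$ already characterized in \thmref{thm:v}. The range bounds \eqref{u1}--\eqref{u-1} and the $p$-monotonicities \eqref{up} immediately give that each of the four contact sets is a half-line in $p$ at every fixed $t$, so the four free boundaries are well defined and carry the one-sided geometry claimed right after their definition. The weak localization with respect to $p_0$ then comes from substituting the active equality constraint into the dormant equation and using the algebraic identity $\LL c=c\,[(\mu_1-\mu_2)p+\mu_2-\rho]$ for any constant $c$. For example, in the interior of ${\cal S}_{1,0}$ we have $v_1=v_0+(1-K)$, together with $\p_t v_0-\LL v_0=0$ from the first VI of \eqref{v_eq} and $\p_t v_1-\LL v_1\geq 0$ from the second; subtracting yields $(1-K)[(\mu_1-\mu_2)p+\mu_2-\rho]\leq 0$, i.e.\ $p\leq p_0$. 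The three analogous computations give $p_{0,-1},p_{1,0}\leq p_0\leq p_{-1,0},p_{0,1}$. The non-overlapping inequalities $p_{0,-1}\leq p_{1,0}$ and $p_{-1,0}\leq p_{0,1}$ drop straight out of \eqref{u1-1}: on ${\cal S}_{0,-1}$ one has $u_{-1}=1-K$, so $u_1\geq-(1-K)$, which combined with $u_1\leq-(1-K)$ forces $u_1=-(1-K)$, i.e.\ ${\cal S}_{0,-1}\subseteq{\cal S}_{1,0}$, and symmetrically ${\cal S}_{0,1}\subseteq{\cal S}_{-1,0}$.

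Promoting these weak inequalities to the strict forms stated in the theorem is the main technical step. I would argue by contradiction using Hopf's boundary-point lemma applied to the linear parabolic equation $\p_t u_1-\LL u_1=0$ that $u_1$ satisfies in the continuation region $\{-(1+K)<u_1<-(1-K)\}$ (verified by combining the first two equations of \eqref{v_eq}, using \eqref{u1-1} to rule out ${\cal S}_{0,-1}$ there). If, say, $p_{1,0}(t_\star)=p_0$, then the function $w:=u_1+(1-K)\leq 0$ vanishes along the contact segment to the left, on which also $\p_p w=0$; in the continuation region to the right, $w$ satisfies a parabolic equation whose zero-order forcing $-(1-K)[(\mu_1-\mu_2)p+\mu_2-\rho]$ is strictly negative for $p>p_0$, so Hopf's lemma applied at $p=p_0$ forces $\p_p w(p_0^+,t_\star)<0$ strictly, contradicting the $C^1$-matching with the contact region. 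The same style of argument rules out $p_{0,-1}(t_\star)=p_{1,0}(t_\star)$ and $p_{-1,0}(t_\star)=p_{0,1}(t_\star)$, yielding the full strict chain $0\leq p_{0,-1}<p_{1,0}<p_0<p_{-1,0}<p_{0,1}\leq 1$.

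Monotonicity in $t$ of each free boundary reduces to showing $\p_t u_1\leq 0$ and $\p_t u_{-1}\leq 0$, which I would establish at the penalized level by differentiating \eqref{ue_eq} in $t$. The resulting linear coupled parabolic system for $(\p_t u_1^\ep,\p_t u_{-1}^\ep)$ has nonnegative off-diagonal couplings (since $\beta_\ep'\geq 0$), so the maximum-principle scheme of \lemref{pro:uetp} applies once $\p_t u_i^\ep(\cdot,0)\leq 0$ is checked directly from \eqref{ue_eq} and the size of $c_0$; a limit $\ep\downarrow 0$ then gives the claimed signs and with them the (non-strict) monotonicity of each free boundary, with strictness following from another Hopf argument. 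For the limits $p_{1,0}(0+)=p_{-1,0}(0+)=p_0$, the upper bounds are already in hand, and the matching lower bound is a short-time barrier: for every $p<p_0$ the constant $-(1-K)$ is a strict subsolution of $\p_t w-\LL w=0$ since $\LL(-(1-K))=-(1-K)[(\mu_1-\mu_2)p+\mu_2-\rho]>0$, so comparison in a local time strip around any fixed $p<p_0$ forces $u_1(p,t)=-(1-K)$ for $t$ small, giving $p_{1,0}(t)\to p_0$ as $t\downarrow 0$; the case of $p_{-1,0}$ is symmetric.

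For $C^\infty$ regularity, Hopf's lemma applied in the continuation region at each free boundary yields the one-sided non-degeneracy $\p_p u_1<0$ at $p=p_{1,0}(t)^+$ (and the analogous signs at the other three boundaries), allowing the defining identity $u_1(p_{1,0}(t),t)=-(1-K)$ to be inverted via the implicit function theorem; the monotonicity in $t$ supplies initial Lipschitz regularity, and standard Schauder estimates applied to the rectified continuation domain bootstrap the free boundary to $C^\infty$. Finally, for the stuck intervals I would build explicit super-solutions from the best-regime ODE: the function $\psi:=v_1-v_0-(1-K)e^{(\mu_1-\rho)t}$ satisfies $\p_t\psi-\LL\psi\leq 0$ in the $v_1$-continuation region (and $\psi\leq 0$ trivially on ${\cal S}_{1,0}$), so by comparison $v_1-v_0\leq(1-K)e^{(\mu_1-\rho)t}<1+K$ throughout $\big\{t<\frac{1}{\mu_1-\rho}\log\frac{1+K}{1-K}\big\}$, showing ${\cal S}_{0,1}=\emptyset$ and $p_{0,1}(t)=1$ there; a symmetric argument with the barrier $(1+K)e^{(\mu_2-\rho)t}$ for $v_0-v_{-1}$ yields the corresponding statement for $p_{0,-1}$. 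The hardest part of the plan is the cluster of Hopf / strong-maximum-principle arguments in the second paragraph, since they have to be adapted to a coupled obstacle system whose operator degenerates at $p=0,1$.
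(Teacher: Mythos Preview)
Your decomposition matches the paper's: weak localization via the algebra of $\LL c$, time-monotonicity of $u_1,u_{-1}$, strict separation by a Hopf argument, explicit sub/super-solutions for the stuck intervals, and bootstrap for $C^\infty$. Three tactical choices differ from the paper.

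\emph{Time monotonicity.} You differentiate \eqref{ue_eq} in $t$ and run a cooperative-system maximum principle on $(\p_t u_1^\ep,\p_t u_{-1}^\ep)$; the paper instead compares the time-shifted pair $(u_i^\ep(\cdot,t+\Delta t))$ with $(u_i^\ep(\cdot,t))$ at a putative maximum. Both work; the paper's route avoids checking signs at $t=0$ and the unbounded $\beta_\ep'$-coefficients.

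\emph{Stuck intervals.} Your global barrier $\psi=v_1-v_0-(1-K)e^{(\mu_1-\rho)t}$ can be made to work but drags in all the coupled obstacles. The paper's argument is simpler: it evaluates the VI for $u_1$ at the degenerate boundary $p=1$, where $\LL$ collapses to $-\la_1\p_p+(\mu_1-\rho)$; since $\p_p u_1\leq 0$, the one-dimensional ODE comparison with $Z(t)=-(1-K)e^{(\mu_1-\rho)t}$ is immediate.

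\emph{Strict separation.} Here there is a genuine gap. Your Hopf argument on $w=u_1+(1-K)$ works for $p_{1,0}(t_\star)=p_0$ because the forcing $-(1-K)[(\mu_1-\mu_2)p+\mu_2-\rho]$ is $\leq 0$ to the right of $p_0$, making $w$ a subsolution. But for $p_{0,-1}(t_\star)=p_{1,0}(t_\star)=c<p_0$ the continuation region lies to the right of $c$ with $p<p_0$, and the same forcing is now $>0$; $w$ becomes a supersolution and Hopf at a maximum no longer applies. The paper sidesteps the sign issue entirely: from the opposite monotonicities it first deduces that a touching pair is \emph{constant} on $[t_0,T]$, giving a flat lateral boundary; it then applies Hopf to $\p_t u_1$, which satisfies the \emph{homogeneous} equation $(\p_t-\LL)(\p_t u_1)=0$ with $\p_t u_1\leq 0$ and $\p_t u_1(c,t)=0$, yielding $\p_{tp}u_1(c,t)<0$ in contradiction with the flatness. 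Your scheme is salvageable (e.g.\ apply the Hopf minimum principle to $u_{-1}-(1-K)$ instead of $u_1+(1-K)$), but as written the ``same style'' claim does not go through, and the paper's use of $\p_t u_1$ is the cleaner, uniform device.
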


\thmref{freeboundaries} is an immediately consequence of Propositions \ref{thm:region} - \ref{thm:p_fenli} in the rest part of this section.
\figref{fig2} demonstrates the shapes and locations of the free boundaries.
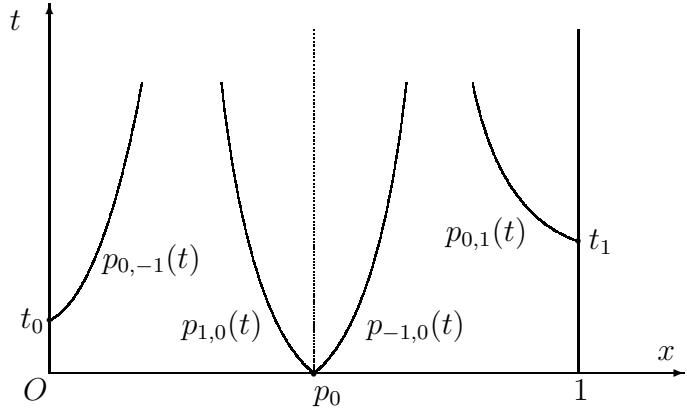
\begin{figure}[H]
\begin{center}
\begin{picture}(330,160)
\put(50,10){\vector(1,0){240}}
\put(50,10){\vector(0,1){140}}
\put(250,10){\line(0,1){130}}
\put(280,15){$x$}
\put(35,140){$t$}
\put(40,0){$O$}
\put(248,0){$1$}
\qbezier[100](150,10)(150,65)(150,140)

\put(150,10){\circle*{2}}\put(150,0){$p_0$}
\qbezier(150,10)(175,30)(185,120) \put(170,25){$p_{-1,0}(t)$}
\qbezier(150,10)(125,30)(115,120) \put(100,25){$p_{1,0}(t)$}
\qbezier(50,30)(70,40)(85,120) \put(70,50){$p_{0,-1}(t)$}\put(50,30){\circle*{2}}\put(39,27){$t_0$}
\qbezier(250,60)(220,70)(210,120) \put(200,60){$p_{0,1}(t)$}\put(250,60){\circle*{2}}\put(254,57){$t_1$}
\end{picture} \medskip
\caption{The free boundaries.} \label{fig2}\end{center}
\end{figure}

We summarize the optimal trading strategies for all the positions and status in Table \ref{table1}. \bigskip
\begin{table}[H]
\begin{center}
\begin{center}
\begin{tabular}{|p{4cm}<{\centering}|p{3cm}<{\centering}|p{3cm}<{\centering}|p{3cm}<{\centering}|}
\hline
status                         & in short position  & in flat position & in long position   \\ \hline
$0<p\leq p_{0,-1}(t)$          & do nothing & sell 1 share    & sell 2 share \\ \hline
$p_{0,-1}(t)<p\leq p_{1,0}(t)$ & do nothing & do nothing & sell 1 share\\ \hline
$p_{1,0}(t)< p<p_{-1,0}(t)$    & do nothing & do nothing & do nothing \\ \hline
$p_{-1,0}(t)\leq p<p_{0,1}(t)$ & buy 1 share& do nothing & do nothing \\ \hline
$p_{0,1}(t)\leq p<1$           & buy 2 share& buy 1 share& do nothing \\ \hline
\end{tabular}
\end{center}
\end{center}
\caption{The optimal strategy} \label{table1}
\end{table}

\begin{proposition}\label{thm:region}

We have
\begin{align}\label{subset1}
{\cal S}_{0,1}\subseteq{\cal S}_{-1,0}\subseteq \big[p_0,1\big)\times (0,T],
\end{align}
and
\begin{align}\label{subset2}
{\cal S}_{0,-1}\subseteq{\cal S}_{1,0}\subseteq \big(0,p_0]\times (0,T].
\end{align}
As a consequence,
\begin{align}\label{weizhi}
0\leq p_{0,-1}(t)\leq p_{1,0}(t)\leq p_0\leq p_{-1,0}(t)\leq p_{0,1}(t)\leq 1,\quad 0\leq t<T.
\end{align}
\end{proposition}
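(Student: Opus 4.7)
First, the chain inclusions ${\cal S}_{0,1} \subseteq {\cal S}_{-1,0}$ in \eqref{subset1} and ${\cal S}_{0,-1} \subseteq {\cal S}_{1,0}$ in \eqref{subset2} follow purely algebraically from the pointwise estimates \eqref{u1}, \eqref{u-1} and the coupling inequality \eqref{u1-1}. Indeed, at $(p,t) \in {\cal S}_{0,1}$ we have $v_0 - v_1 = -(1+K)$, whence \eqref{u1-1} yields $v_0 - v_{-1} \geq 1+K$ while \eqref{u-1} gives the reverse inequality; equality forces $(p,t) \in {\cal S}_{-1,0}$. The inclusion ${\cal S}_{0,-1} \subseteq {\cal S}_{1,0}$ is symmetric, using \eqref{u1-1} and \eqref{u1}.

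The core of the proof is the half-space containment ${\cal S}_{-1,0} \subseteq [p_0, 1) \times (0, T]$, which I argue by contradiction. Suppose some $(p_*, t_*) \in \Omega$ satisfies $p_* < p_0$ and $v_0 - v_{-1} = 1+K$ at $(p_*, t_*)$. By \eqref{u-1} the function $F := v_0 - v_{-1} - (1+K)$ is globally non-positive and vanishes at $(p_*, t_*)$, making it an interior global maximum; hence $\partial_p F = 0$, $\partial_{pp} F \leq 0$ and $\partial_t F \geq 0$ there. Using the identity $\LL c = (\mu_1 - \mu_2)(p - p_0)c$ for any constant $c$ together with the non-negativity of the second-order coefficient of $\LL$,
\[
\LL(v_0 - v_{-1})\big|_{(p_*,t_*)} \leq (\mu_1 - \mu_2)(p_* - p_0)(1+K) < 0,
\]
while $\partial_t v_0 \geq \partial_t v_{-1}$ at the same point. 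Consequently $\partial_t v_{-1} - \LL v_{-1} < \partial_t v_0 - \LL v_0$. The third variational inequality in \eqref{v_eq} gives $\partial_t v_{-1} - \LL v_{-1} \geq 0$, forcing $\partial_t v_0 - \LL v_0 > 0$. The first variational inequality in \eqref{v_eq}, together with $v_0 - v_{-1} - (1-K) = 2K > 0$, then forces $v_0 - v_1 + (1+K) = 0$ at $(p_*, t_*)$, i.e., $(p_*, t_*) \in {\cal S}_{0,1}$ as well.

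A second round of the same argument produces the required contradiction. Set $G := v_1 - v_0 - (1+K)$; by \eqref{u1} $G \leq 0$ globally and $G(p_*, t_*) = 0$, so $(p_*, t_*)$ is a global maximum of $G$. The analogous calculation yields $\LL(v_1 - v_0)(p_*, t_*) < 0$ and $\partial_t v_1 \leq \partial_t v_0$, whence $\partial_t v_1 - \LL v_1 > \partial_t v_0 - \LL v_0 > 0$. But $v_1 - v_0 - (1-K) = 2K > 0$ at $(p_*, t_*)$, contradicting the second variational inequality in \eqref{v_eq}. The mirror-image inclusion ${\cal S}_{1,0} \subseteq (0, p_0] \times (0, T]$ is handled by the minimum-principle version of the argument: a putative $(p_*, t_*) \in \Omega$ with $p_* > p_0$ and $v_1 - v_0 = 1-K$ makes the non-negative functions $H := v_1 - v_0 - (1-K)$ and subsequently $H' := v_0 - v_{-1} - (1-K)$ attain their minimum $0$, producing $\LL(v_1 - v_0) > 0$ and then $\LL(v_0 - v_{-1}) > 0$, and terminating in a contradiction with the third variational inequality in \eqref{v_eq}. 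The ordering \eqref{weizhi} is then immediate from the definitions of the free boundary curves $p_{0,-1}(t)$, $p_{1,0}(t)$, $p_{-1,0}(t)$ and $p_{0,1}(t)$.

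The main technical obstacle is regularity: \thmref{thm:v} only provides $v_i \in W^{2,1}_{q,\mathrm{loc}} \cap C^{1+\alpha,(1+\alpha)/2}$, so the pointwise second-derivative inequality $\partial_{pp} F(p_*, t_*) \leq 0$ at the maximum holds only in the a.e.\ sense. To make the argument fully rigorous I would transplant it to the smooth penalty approximations $(v^\ep_0, v^\ep_1, v^\ep_{-1}) \in C^{2,1}$ from \propref{lem:ve}, establishing the analogous statement at the $\ep$-level with quantitative strict inequalities that are stable as $\ep \to 0$, and then pass to the limit via the uniform $C^{1+\alpha,(1+\alpha)/2}$-convergence established in the proof of \thmref{thm:v}.
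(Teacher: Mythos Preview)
Your argument is essentially the same as the paper's: both exploit that at any point where an obstacle is attained the relevant difference function sits at a global extremum, so $(\partial_t-\LL)$ applied to it is controlled by $(\partial_t-\LL)$ applied to the extremal constant, which equals $(\mu_1-\mu_2)(p-p_0)$ times that constant. The paper runs this directly with a two-case split (whether $(p,t)\in{\cal S}_{0,1}$ or not, using $v_{-1}-v_0$ in one case and $v_{-1}-v_1$ in the other), while you run the contrapositive in two rounds; the mechanics are identical. One slip: in your second round you write $\partial_t v_1\leq\partial_t v_0$, but since $G=v_1-v_0-(1+K)$ attains a \emph{maximum} you in fact get $\partial_t(v_1-v_0)\geq 0$; the desired inequality $\partial_t v_1-\LL v_1>\partial_t v_0-\LL v_0$ still follows. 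Your regularity concern is well taken and applies equally to the paper, which simply writes $(\partial_t-\LL)(v_{-1}-v_0)=(\partial_t-\LL)(-(1+K))$ without comment; lifting the argument to the smooth penalised system and passing to the limit is indeed the clean fix.
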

\begin{proof}
We only prove \eqref{subset1} as the proof of \eqref{subset2} is similar.
\par
Due to \eqref{u1-1} and \eqref{v_eq}, we have
\[
v_0-v_1+1+K\geq v_{-1}-v_0+1+K\geq 0.
\]
This implies the first part of \eqref{subset1}, namely, ${\cal S}_{0,1}\subseteq{\cal S}_{-1,0}$.

Now, we prove the second part of \eqref{subset1}, ${\cal S}_{-1,0}\subseteq \big[p_0,1\big)\times (0,T]$.
For any $(p,t)\in {\cal S}_{-1,0}$, we have $v_{-1}-v_0+(1+K)=0$. Thus by \eqref{v_eq},
\[
\p_t v_{-1}-\LL v_{-1}\geq0.
\]
If $(p,t)\in {\cal S}_{-1,0}\setminus{\cal S}_{0,1}$, i.e. $v_0-v_1+1+K>0$. Noticing $v_0-v_{-1}-(1-K)=2K>0$, we have by \eqref{v_eq},
\[
\p_t v_0-\LL v_0=0.
\]
So
\[
0\leq \Big(\p_t -\LL \Big)\Big(v_{-1}-v_0\Big)=\Big(\p_t -\LL \Big)\Big(-(1+K)\Big)=[(\mu_1-\mu_2)p+\mu_2-\rho](1+K)
\]
leading to
\begin{align}\label{p>}
p\geq \frac{\rho-\mu_2}{\mu_1-\mu_2}=p_{0}.
\end{align}
Otherwise, $(p,t)\in {\cal S}_{0,1}$, i.e. $v_0-v_1+1+K=0$, then $v_1-v_0-(1-K)=2K>0$. By \eqref{v_eq},
\[
\p_t v_1-\LL v_1=0.
\]
Note that $v_{-1}-v_1+2(1+K)=0$, thus
\[
0\leq \Big(\p_t -\LL \Big)\Big(v_{-1}-v_1\Big)=\Big(\p_t -\LL \Big)\Big(-2(1+K)\Big)=2[(\mu_1-\mu_2)p+\mu_2-\rho](1+K)
\]
which also implies \eqref{p>}.
So we proved the second part of \eqref{subset1}.
\end{proof}

The financial significance of this proposition is obvious. For instance, the first part of \eqref{subset1} means that if the trader wants to transfer from the present short position to the long position, he must first enter the flat position before going to the long position. The second part of \eqref{subset1} means if the (estimated) chance of the market status being bull, $p$, is too small (that is, less than the constant $p_0$), the trader should not buy the stock.

\begin{lemma}\label{thm:vt}
We have
\begin{align}\label{u1t}
\p_t(v_0-v_1)&\leq 0,\\
\p_t(v_0-v_{-1})&\leq 0. \label{u-1t}
\end{align}
\end{lemma}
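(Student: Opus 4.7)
My plan is to establish the inequalities at the level of the penalty approximation $(u^\ep_1,u^\ep_{-1})$ from \lemref{thm:ue} and then pass to the limit, mirroring the template set by \lemref{pro:uetp}. Set $q^\ep_i:=\p_t u^\ep_i$ for $i=1,-1$. Because the coefficients of the operator $\LL$ are independent of $t$ and the equations \eqref{ue_eq} are $C^\infty$ in their arguments (the penalty $\beta_\ep$ being $C^2$), differentiating \eqref{ue_eq} in $t$ produces a linear parabolic system for $(q^\ep_1,q^\ep_{-1})$ of the form
\begin{align*}
\p_t q^\ep_1-\LL q^\ep_1+\big[\beta_\ep'(u^\ep_1+1+K)+\beta_\ep'(-u^\ep_1-(1-K))\big]q^\ep_1+\beta_\ep'(u^\ep_{-1}-(1-K))q^\ep_{-1}&=0,\\
\p_t q^\ep_{-1}-\LL q^\ep_{-1}+\big[\beta_\ep'(u^\ep_{-1}-(1-K))+\beta_\ep'(-u^\ep_{-1}+(1+K))\big]q^\ep_{-1}+\beta_\ep'(u^\ep_1+1+K)q^\ep_1&=0,
\end{align*}
with Neumann boundary conditions inherited from \eqref{ue_bc} by commuting $\p_p$ and $\p_t$.

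The next step is to verify the sign of the initial data. Plugging $t=0$ into \eqref{ue_eq} together with \eqref{ue_0} and using $\beta_\ep(2K)=0$, $\beta_\ep(0)=-c_0$ gives the explicit formulas $q^\ep_1(p,0)=-((\mu_1-\mu_2)p+\mu_2-\rho)(1-K)-c_0$ and $q^\ep_{-1}(p,0)=((\mu_1-\mu_2)p+\mu_2-\rho)(1+K)-c_0$. Since $c_0$ was chosen to satisfy $c_0\geq c_1+(\mu_1-\rho)(1+K)$ in the proof of \lemref{thm:ue}, both expressions are $\leq 0$ on $[\ep,1-\ep]$. Combined with the homogeneous Neumann conditions, this places the problem in the form suitable for a maximum-principle argument.

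With the exponential rescaling $Q_i:=e^{-\la t}q^\ep_i$, I intend to mimic the strategy of \lemref{pro:uetp}: assume for contradiction that $M:=\max_{\overline\Omega_\ep}\max(Q_1,Q_{-1})>0$, consider WLOG the case $Q_1(p_*,t_*)=M$, apply the first- and second-order conditions at $(p_*,t_*)$ (using the Neumann condition to cover $p_*\in\{\ep,1-\ep\}$), and bound the coupling term $\beta_\ep'(u^\ep_{-1}-(1-K))Q_{-1}(p_*,t_*)$ from above by $\beta_\ep'(0)\,M$ using $Q_{-1}(p_*,t_*)\le M$, $\beta_\ep''\leq 0$ and $u^\ep_{-1}-(1-K)\ge \ep>0$ from \eqref{ue-1}. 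Choosing $\la$ strictly larger than the zero-th order coefficient of $\LL$ plus $\beta_\ep'(0)$ then produces a contradiction exactly as in the proof of \lemref{pro:uetp}.

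Finally, I pass to the limit as $\ep\to 0$ using the convergence established in \thmref{thm:v}: $u^\ep_i\to v_0-v_i$ uniformly in $C^{1+\al,(1+\al)/2}$ and weakly in $W^{2,1}_{q,\mathrm{loc}}$, so the pointwise inequality $\p_t u^\ep_i\le 0$ passes to the weak limit to yield \eqref{u1t} and \eqref{u-1t}. The main obstacle I anticipate is the maximum-principle step: unlike the situation in \lemref{pro:uetp}, both components are required to have the \emph{same} sign, so the ``competitive'' cross-coupling (both $\beta_\ep'$ coefficients are non-negative on both sides) can have the wrong sign when $Q_{-1}(p_*,t_*)<0$. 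Overcoming this requires exploiting the fact that $M$ is the maximum of $Q_1$ \emph{and} $Q_{-1}$ together (not separately), so that at $(p_*,t_*)$ one indeed has $Q_{-1}\le M$, and also exploiting $\beta_\ep'\ge 0$ to absorb the coupling term into $\beta_\ep'(0)M$; the analogous argument for the symmetric case $M=Q_{-1}(p_*,t_*)$ is identical by the symmetry of the system.
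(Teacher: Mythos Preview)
Your approach has a genuine gap at the maximum-principle step, and your own diagnosis of the difficulty is correct while your proposed cure is not. After differentiating in $t$, the off-diagonal couplings in the system for $(q^\ep_1,q^\ep_{-1})$ are $+\beta_\ep'(\cdot)\geq 0$; for a joint maximum argument proving \emph{both} components are $\leq 0$, the off-diagonal sign is precisely the wrong one. Concretely, at the maximum $(p_*,t_*)$ of $Q_1$ you obtain
\[
0=\p_t Q_1-\LL Q_1+\la Q_1+\big[\beta_\ep'(a)+\beta_\ep'(b)\big]Q_1+\beta_\ep'(c)\,Q_{-1},
\]
and to reach a contradiction you need a \emph{lower} bound on the last term. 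The inequality $Q_{-1}(p_*,t_*)\leq M$ (which is all the joint maximum gives) only bounds $\beta_\ep'(c)\,Q_{-1}$ from \emph{above}; it does nothing to exclude $Q_{-1}(p_*,t_*)$ from being very negative, and then $\beta_\ep'(c)\,Q_{-1}$ can absorb $(\la-C)M$ no matter how large $\la$ is. This is unlike \lemref{pro:uetp}, where one proves $W_1\leq 0$ and $W_{-1}\geq 0$ \emph{with opposite signs}: there the WLOG assumption $W_1(p_1,t_1)\geq -W_{-1}(p_{-1},t_{-1})$ yields the lower bound $W_{-1}(p_1,t_1)\geq -W_1(p_1,t_1)$, which is exactly what makes the coupling controllable. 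That mechanism is absent here.

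The paper avoids this obstacle by \emph{not} linearizing. It compares the time-shift $\wu^\ep_1(p,t)=u^\ep_1(p,t+\Delta t)$ with $u^\ep_1(p,t)$ directly in the nonlinear equations. At a hypothetical maximum of (an exponentially weighted) $\wu^\ep_1-u^\ep_1-2\ep$, the a priori bounds \eqref{ue1} and \eqref{ue1ue-1} force $-u^\ep_1-(1-K)>2\ep$ and $u^\ep_{-1}-(1-K)>2\ep$, so $\beta_\ep(-u^\ep_1-(1-K))=\beta_\ep(u^\ep_{-1}-(1-K))=0$ there. This \emph{annihilates} the troublesome coupling term outright (rather than bounding it), and the remaining terms are handled by monotonicity of $\beta_\ep$ together with $-\wu^\ep_1\leq\wu^\ep_{-1}$. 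The nonlinear comparison retains the pointwise information from Lemmas~\ref{thm:ue} and \ref{lem:ue1ue-1} that your $t$-differentiation throws away.
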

\begin{proof}
Come back to $(u^\ep_1,u^\ep_{-1})$, the solution of \eqref{ue_eq}.
It only needs to prove that, for $i=1$, $-1$, and any fixed $\Delta t\in(0,T)$,
\begin{align*}
\wu^\ep_i(p,t):=u^\ep_i(p,t+\Delta t)\leq u^\ep_i(p,t)+2\ep e^{\la T} \quad\hbox{in}\; Q:=[\ep,1-\ep]\times[0,T-\Delta t],
\end{align*}
where $\la=\mu_1+\mu_2$ is a constant.
Define
\[
U_i=e^{-\la t}u^\ep_i.
\]
Our problem reduces to proving $\wU_i(p,t):=U_i(p,t+\Delta t)$ satisfies
\begin{align}\label{Dt}
e^{\la \Delta t}\wU_i(p,t)\leq U_i(p,t)+2\ep e^{\la (T-t)} \quad\hbox{in}\; Q.
\end{align}

We first prove \eqref{u1t}.
Now, we argue by contradiction. Suppose \eqref{Dt} was not true when $i=1$.
Denote by $(p^*,t^*)$ a maximum point of $e^{\la \Delta t}\wU_1-U_1-2\ep$ in $Q$, then we have
\[
\Big(e^{\la \Delta t}\wU_1-U_1-2\ep \Big)(p^*,t^*)>0,
\]
as well as
\begin{align}\label{wue}
\Big(\wu^\ep_1-u^\ep_1-2\ep\Big)(p^*,t^*)>0.
\end{align}
Since $\p_p\Big(e^{\la \Delta t}\wU_1-U_1-2\ep \Big)=0$ on $\{p=\ep\}$ and $\{p=1-\ep\}$, Hopf lemma implies $(p^*,t^*)$ will not lie on the two boundaries. And noting that
\[
\Big(e^{\la \Delta t}\wU_1-U_1-2\ep \Big)(p,0)=u^\ep_1(p,\Delta t)-[-(1-K)]-2\ep<0,
\]
we conclude that $(p^*,t^*)$  is inside the domain $Q$ or at the upper boundary of $Q$, so
\begin{align*}
\Big(e^{\la \Delta t}\wU_1-U_1\Big)(p^*,t^*)&>0, & \p_p\Big(e^{\la \Delta t}\wU_1-U_1\Big)(p^*,t^*)&=0,\\[2mm]
\p_{pp}\Big(e^{\la \Delta t}\wU_1-U_1\Big)(p^*,t^*)&\leq0,& \p_t\Big(e^{\la \Delta t}\wU_1-U_1\Big)(p^*,t^*)&\geq0.
\end{align*}
The above implies that
\begin{align}\label{wu_eq}
\Big(\p_t -\LL \Big)\Big(\wu^\ep_1-u^\ep_1\Big)\bigg|_{(p^*,t^*)}=e^{\la t}\Big(\p_t -\LL +\la\Big)\Big(e^{\la \Delta t}\wU^\ep_1-U^\ep_1\Big)\bigg|_{(p^*,t^*)}>0.
\end{align}

On the other hand, since both $u^\ep_1$ and $\wu^\ep_1$ satisfy the first equation in \eqref{ue_eq}, we have
\begin{align}\label{ue*}
\Big(\p_t -\LL \Big)(\wu^\ep_1-u^\ep_1)&=-\beta_\ep(\wu^\ep_1+1+K)+\beta_\ep(-\wu^\ep_1-(1-K))-\beta_\ep(\wu^\ep_{-1}-(1-K))\nonumber\\
&\quad\;+\beta_\ep(u^\ep_1+1+K)-\beta_\ep(-u^\ep_1-(1-K))+\beta_\ep(u^\ep_{-1}-(1-K)).
\end{align}
By \eqref{wue} and \eqref{ue1}, we get
\[
-u^\ep_1(p^*,t^*)-(1-K)>-\wu^\ep_1(p^*,t^*)-(1-K)+2\ep\geq2\ep.
\]
And by \eqref{ue1ue-1},
\[
u^\ep_{-1}(p^*,t^*)-(1-K)\geq-u^\ep_1(p^*,t^*)-(1-K)> 2\ep.
\]
Thus, by the definition of $\beta_\ep$,
\[
\beta_\ep(u^\ep_{-1}-(1-K))\Big|_{(p^*,t^*)}=\beta_\ep(-u^\ep_1-(1-K))\Big|_{(p^*,t^*)}=0.
\]
Moreover, due to $\beta_\ep'\geq 0$, $-\wu^\ep_1\leq \wu^\ep_{-1}$, and $u^\ep_1(p^*,t^*)<\wu^\ep_1(p^*,t^*)$,
we have
\[
\beta_\ep(-\wu^\ep_1-(1-K))-\beta_\ep(\wu^\ep_{-1}-(1-K))\Big|_{(p^*,t^*)}\leq0,\quad
\beta_\ep(u^\ep_1+1+K)-\beta_\ep(\wu^\ep_1+1+K)\Big|_{(p^*,t^*)}\leq0.
\]
Therefore, by \eqref{ue*} we have
\begin{align*}
\Big(\p_t -\LL \Big)\Big(\wu^\ep_1-u^\ep_1\Big)\bigg|_{(p^*,t^*)}\leq0,
\end{align*}
which contradicts \eqref{wu_eq}. Therefore, \eqref{Dt} is true.

Letting $\ep\rightarrow0$ in \eqref{Dt}, we get
\[
\Big(v_0-v_1\Big)(p,t+\Delta t)\leq \Big(v_0-v_1\Big)(p,t),
\]
which implies \eqref{u1t}. The proof of \eqref{u-1t} is similar.
\end{proof}

By \lemref{thm:vt}, we obtain
\begin{proposition}\label{prop:monotonefreeboundary}
The free boundaries $p_{0,-1}(t)$ and $p_{-1,0}(t)$ are increasing, and $p_{0,1}(t)$ and $p_{1,0}(t)$ are decreasing.
\end{proposition}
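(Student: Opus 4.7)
The plan is to chain together the time-monotonicity of Lemma~\ref{thm:vt} with the a priori bounds \eqref{u1}--\eqref{u-1} of Theorem~\ref{thm:v}, and the $p$-monotone representations of the regions ${\cal S}_{i,j}$ that were already derived from \eqref{up}. Concretely, each region is cut out by the \emph{attainment of an extreme value} of either $v_0-v_1$ or $v_0-v_{-1}$:
\[
{\cal S}_{0,1}\colon v_0-v_1=-(1+K),\qquad{\cal S}_{1,0}\colon v_0-v_1=-(1-K),
\]
\[
{\cal S}_{0,-1}\colon v_0-v_{-1}=1-K,\qquad{\cal S}_{-1,0}\colon v_0-v_{-1}=1+K.
\]
Since Lemma~\ref{thm:vt} tells us both differences are non-increasing in $t$, attainment of the \emph{lower} bound at $(p,t)$ forces attainment at $(p,t')$ for every $t'\geq t$, while attainment of the \emph{upper} bound at $(p,t)$ forces attainment at $(p,t')$ for every $t'\leq t$.

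The second step is to translate this set-monotonicity into monotonicity of the boundary curves. Using the already-established identities ${\cal S}_{0,1}=\{p\geq p_{0,1}(t)\}$ etc., I would argue as follows. The region ${\cal S}_{0,1}$ is non-decreasing in $t$ (growth of a lower-bound set), which by the $\inf$-definition of $p_{0,1}(t)$ forces $p_{0,1}(\cdot)$ to be non-increasing. Analogously ${\cal S}_{0,-1}$ is non-decreasing in $t$, and since it is a sub-level set $\{p\leq p_{0,-1}(t)\}$, its $\sup$-definition yields that $p_{0,-1}(\cdot)$ is non-decreasing. The two remaining boundaries are dual: ${\cal S}_{1,0}$ and ${\cal S}_{-1,0}$ both shrink in $t$ (upper-bound attainment), so their respective $\sup$ and $\inf$ boundaries $p_{1,0}(\cdot)$ and $p_{-1,0}(\cdot)$ are non-increasing and non-decreasing, respectively.

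There is essentially no obstacle: the proposition is a short bookkeeping exercise combining Lemma~\ref{thm:vt} with the already-proved monotonicity \eqref{up} in $p$ and the bounds \eqref{u1}--\eqref{u-1}. The only minor care needed is to write out the logical implication cleanly for each of the four boundaries, matching lower-bound attainment with $\inf$-defined (resp.\ $\sup$-defined) boundaries in the correct direction of time, so as not to confuse the four monotonicity signs. The statement here claims only monotonicity (not strict monotonicity), so no strong maximum principle or real-analyticity is required at this stage; strictness will be dealt with separately in the subsequent propositions where the free boundaries are shown to be smooth and to satisfy the strict inequalities of Theorem~\ref{freeboundaries}.
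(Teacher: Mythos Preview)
Your proposal is correct and is essentially the same approach as the paper's own proof, which consists of the single sentence ``By \lemref{thm:vt}, we obtain'' the result. You have simply unpacked this one-liner in detail, correctly matching the time-monotonicity of $v_0-v_1$ and $v_0-v_{-1}$ from \lemref{thm:vt} with the bounds \eqref{u1}--\eqref{u-1} and the $p$-monotone descriptions of the regions ${\cal S}_{i,j}$ to deduce the claimed monotonicities of the four free boundaries.
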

The financial significance of this proposition is clear. Because the investor must take the flat position at maturity, the investor is getting less likely to transfer from the flat position to the other two positions, and more likely to leave from
the short and long positions to the flat position.
\par
When $t$ is very small, we know the exact values of $p_{0,1}(t)$ and $p_{0,-1}(t)$.
\begin{proposition}\label{p1}
There exists $t_1\geq\frac{1}{\mu_1-\rho}\log\frac{1+K}{1-K}$, such that $p_{0,1}(t)=1$ if $t\leq t_1$; and, there exists $t_0\geq\frac{1}{\rho-\mu_2}\log\frac{1+K}{1-K}$, such that $p_{0,-1}(t)=0$ if $t\leq t_0$.
\end{proposition}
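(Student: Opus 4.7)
The strategy is to construct explicit super-solutions for $v_1$ and $v_{-1}$ separately (treating $v_0$ as a given function), use the comparison principle to derive improved bounds on $v_0-v_1$ and $v_0-v_{-1}$, and then read off that the obstacles defining ${\cal S}_{0,1}$ and ${\cal S}_{0,-1}$ cannot be reached while $t$ remains small.

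For the first claim I would test the candidate
\[\tilde v_1(p,t):= v_0(p,t)+(1-K)e^{(\mu_1-\rho)t}.\]
At $t=0$ it matches $v_1$; the obstacle of the VI for $v_1$ reads $\tilde v_1-v_0-(1-K)=(1-K)(e^{(\mu_1-\rho)t}-1)\geq 0$. Since $\LL$ acts on a $p$-independent function by multiplication by $r(p):=(\mu_1-\mu_2)p+\mu_2-\rho$, a direct computation gives
\[
\p_t\tilde v_1-\LL \tilde v_1 =(\p_t v_0-\LL v_0)+(\mu_1-\mu_2)(1-p)(1-K)e^{(\mu_1-\rho)t}\geq 0,
\]
using $\p_t v_0-\LL v_0\geq 0$ from the VI for $v_0$ together with $\mu_1>\mu_2$ and $p\leq 1$. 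Hence $\tilde v_1$ is a super-solution of the scalar VI for $v_1$, so comparison gives $v_1\leq \tilde v_1$, i.e.\ $v_0(p,t)-v_1(p,t)\geq -(1-K)e^{(\mu_1-\rho)t}$. For $t<\bar t_1:=\frac{1}{\mu_1-\rho}\log\frac{1+K}{1-K}$ the right-hand side strictly exceeds $-(1+K)$, so no point $(p,t)$ lies in ${\cal S}_{0,1}$ and $p_{0,1}(t)=1$. Taking $t_1:=\sup\{t:p_{0,1}(s)=1 \text{ for all } s<t\}$ yields $t_1\geq \bar t_1$. The second claim is obtained by a parallel argument with the candidate $\tilde v_{-1}(p,t):=v_0(p,t)-(1+K)e^{-(\rho-\mu_2)t}$, which matches $v_{-1}$ at $t=0$, satisfies $\tilde v_{-1}-v_0+(1+K)=(1+K)(1-e^{-(\rho-\mu_2)t})\geq 0$, and delivers
\[\p_t\tilde v_{-1}-\LL \tilde v_{-1}=(\p_t v_0-\LL v_0)+(1+K)(\mu_1-\mu_2)p\,e^{-(\rho-\mu_2)t}\geq 0.\]
By comparison $v_{-1}\leq \tilde v_{-1}$, so $v_0-v_{-1}\geq (1+K)e^{-(\rho-\mu_2)t}$, which is strictly larger than $1-K$ whenever $t<\frac{1}{\rho-\mu_2}\log\frac{1+K}{1-K}$, forcing $p_{0,-1}(t)=0$.

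The only delicate point is running the comparison rigorously in the presence of the coupling with $v_0$ and the degeneracy of $\LL$ at $p=0,1$. I would carry it out at the penalty level on $\Omega_\ep$, with shifted candidates $\tilde v_1^\ep:=v_0^\ep+(1-K+2\ep)e^{(\mu_1-\rho)t}$ and $\tilde v_{-1}^\ep:=v_0^\ep-(1+K)e^{-(\rho-\mu_2)t}+2\ep e^{(\mu_1-\rho)t}$. The $O(\ep)$ shift is designed so that the relevant penalty argument remains $\geq 2\ep$, making $\beta_\ep(\cdot)\equiv 0$ on the candidate. Together with $\p_t v_0^\ep-\LL v_0^\ep=-\beta_\ep(u_1^\ep+1+K)-\beta_\ep(u_{-1}^\ep-(1-K))\geq 0$ (from $\beta_\ep\leq 0$) and the same PDE computations, this gives the super-solution property of $\tilde v_i^\ep$; the Neumann conditions $\p_p\tilde v_i^\ep=\p_p v_0^\ep=0$ on $\{p=\ep,1-\ep\}$ and the inequality $\tilde v_i^\ep(\cdot,0)\geq v_i^\ep(\cdot,0)$ close the setup. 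A standard comparison (via $e^{-\lambda t}(v_i^\ep-\tilde v_i^\ep)$ with $\lambda$ large enough to absorb the zero-order term) yields $v_i^\ep\leq \tilde v_i^\ep$ on $\overline\Omega_\ep$, and sending $\ep\to 0$ recovers the bounds above. The main obstacle is choosing the $O(\ep)$ corrections so that the penalty term vanishes \emph{and} the super-solution inequality still survives; in the $v_{-1}$ case this is exactly why the extra $2\ep e^{(\mu_1-\rho)t}$ correction is needed rather than a naive shift by $2\ep$.
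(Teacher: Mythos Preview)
Your argument is correct and genuinely different from the paper's. The paper restricts attention to the degenerate boundary $p=1$ (resp.\ $p=0$), where the equation for $u_1=v_0-v_1$ collapses to an ODE-type inequality in $t$; it compares $u_1(1,t)$ against the explicit function $Z(t)=-(1-K)e^{(\mu_1-\rho)t}$, and then invokes the monotonicity $\p_p u_1\leq 0$ to conclude that the obstacle $-(1+K)$ is not reached anywhere. You instead build a \emph{global} super-solution $\tilde v_1=v_0+(1-K)e^{(\mu_1-\rho)t}$ for the $v_1$ variational inequality, using only the already-known fact that $\p_t v_0-\LL v_0\geq 0$; this yields the bound $v_0-v_1\geq -(1-K)e^{(\mu_1-\rho)t}$ for all $p$ simultaneously, without ever evaluating at the degenerate boundary and without appealing to $\p_p u_1\leq 0$.

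The trade-offs are as follows. Your approach is cleaner in that it avoids the formal step of working at $p=1$, where the regularity of $u_1$ up to the boundary is not established in the paper, and it does not need the monotonicity Lemma~\ref{pro:uetp} as an input. On the other hand, it requires a comparison principle for the single obstacle problem for $v_1$ with $v_0$ treated as a fixed datum; you correctly recognize that this must be justified at the penalty level on $\Omega_\ep$, and your $O(\ep)$-shifted candidates $\tilde v_i^\ep$ do the job (the key checks---that the shifted penalty argument stays $\geq 2\ep$ so $\beta_\ep\equiv 0$, that $\p_t v_0^\ep-\LL v_0^\ep\geq 0$ because $\beta_\ep\leq 0$, and that the Neumann data match---are all sound). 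The paper's approach is more localized and perhaps more intuitive, trading the global comparison for a one-dimensional sub-solution argument at the boundary.
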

\begin{proof}
We will only prove the first assertion, the proof of the other one is similar. Note that $u_1=v_0-v_1$ satisfies
\begin{align}\label{u1_va}
\min\Big\{\p_t u_1-\LL u_1,\;u_1+(1+K)\Big\}=0 \quad \hbox{in}\quad [p_0,1]\times[0,T].
\end{align}
Especially, at the right boundary $p=1$, we have
\begin{align}\label{p_l}
\left\{
\begin{array}{ll}
\p_tu_1(1,t)-(\mu_1-\rho)u_1(1,t)=-\la_1\p_pu_1(1,t),\quad \hbox{if}\; u_1(1,t)>-(1+K);\\[2mm]
\p_tu_1(1,t)-(\mu_1-\rho)u_1(1,t)\geq-\la_1\p_pu_1(1,t),\quad \hbox{if}\; u_1(1,t)=-(1+K);\\[2mm]
u_1(1,0)=-(1-K).
\end{array}
\right.
\end{align}
Define
\begin{align*}
\left\{
\begin{array}{ll}
Z(t)=-(1-K)e^{(\mu_1-\rho)t},\quad \hbox{if}\quad t<\frac{1}{\mu_1-\rho}\log\frac{1+K}{1-K};\\[2mm]
Z(t)=-(1+K),\quad \hbox{if}\quad t\geq\frac{1}{\mu_1-\rho}\log\frac{1+K}{1-K}.
\end{array}
\right.
\end{align*}
Then it satisfies
\begin{align*}
\left\{
\begin{array}{ll}
\p_tZ(t)-(\mu_1-\rho)Z(t)= 0,\quad \hbox{if}\quad Z(t)>-(1+K);\\[2mm]
\p_tZ(t)-(\mu_1-\rho)Z(t)\geq 0,\quad \hbox{if}\quad Z(t)=-(1+K);\\[2mm]
Z(0)=-(1-K).
\end{array}
\right.
\end{align*}
Since $\p_pu_1(1,t)\leq 0$, we see that $Z(t)$ is a sub-solution of \eqref{p_l}.
Therefore, $ u_1(1,t)\geq Z(t)$. In particular, when $t<\frac{1}{\mu_1-\rho}\log\frac{1+K}{1-K}$, we have $u_1(1,t)\geq Z(t)>-(1+K)$, which implies $p_{0,1}(t)=1$.
\end{proof}

\begin{proposition}\label{thm:p_qidian}
The free boundaries $p_{1,0}(t)$ and $p_{-1,0}(t)$ are continuous. Moreover, both their initial points are $p_0$, i.e.,
\begin{align*}
p_{1,0}(0+)=p_0,\quad p_{-1,0}(0+)=p_0.
\end{align*}
\end{proposition}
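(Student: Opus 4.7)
The plan is to split the proof into three steps: first extract the right-limits at $t=0$ from monotonicity, then pin down these limits as $p_0$ by a barrier construction for $v_1$ (and symmetrically for $v_{-1}$), and finally rule out jump discontinuities on $(0,T]$ using the parabolic equation that $w:=v_1-v_0-(1-K)$ satisfies. Throughout, set $c(p):=(\mu_1-\mu_2)p+\mu_2-\rho$, so that $c(p)<0$ iff $p<p_0$, and note $w\geq 0$ with $\mathcal{S}_{1,0}=\{w=0\}$. Existence of the right-limits $p_{1,0}(0+)$ and $p_{-1,0}(0+)$, together with the bounds $p_{1,0}(0+)\leq p_0\leq p_{-1,0}(0+)$, is immediate from \propref{prop:monotonefreeboundary} and \propref{thm:region}.

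The core step is the lower bound $p_{1,0}(0+)\geq p_0$. For any small $\eta>0$, I would take a smooth cutoff $\phi\in C^\infty(\R)$ with $\phi\equiv 0$ on $(-\infty,p_0-\eta]$, $\phi(p_0)=1$, and $\phi$ nondecreasing, and build the super-barrier
\[
\widetilde v_1(p,t) := v_0(p,t)+(1-K)+C_1\, t^{(1+\al)/2}\phi(p).
\]
The obstacle inequality $\widetilde v_1-v_0-(1-K)\geq 0$ is trivial; for the PDE part, linearity of $\mathcal{L}$ together with $\p_t v_0-\mathcal{L}v_0\geq 0$ gives
\[
\p_t\widetilde v_1-\mathcal{L}\widetilde v_1\;\geq\;-c(p)(1-K)+\p_t\!\left(C_1 t^{(1+\al)/2}\phi\right)-\mathcal{L}\!\left(C_1 t^{(1+\al)/2}\phi\right),
\]
where on $(0,p_0)$ the first term is strictly positive, and on $\operatorname{supp}\phi$ the singular factor $t^{(\al-1)/2}$ from the $\p_t$-piece dominates the $\mathcal{L}$-piece of order $t^{(1+\al)/2}$ once $\delta>0$ is small enough. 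On the parabolic boundary: $\widetilde v_1(p,0)=1-K=v_1(p,0)$; the $C^{1+\al,(1+\al)/2}$ regularity from \thmref{thm:v}, combined with $w(p_0,0)=0$, gives $w(p_0,t)\leq C_0 t^{(1+\al)/2}$, so picking $C_1>C_0$ ensures $\widetilde v_1(p_0,t)\geq v_1(p_0,t)$; and $p=0$ is Fichera-degenerate, so no condition is needed. Comparison then yields $v_1\leq\widetilde v_1$ on $(0,p_0)\times(0,\delta]$, and since $\phi$ vanishes on $\{p\leq p_0-\eta\}$ this forces $v_1=v_0+(1-K)$ there, i.e.\ $(p,t)\in\mathcal{S}_{1,0}$. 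Hence $p_{1,0}(t)\geq p_0-\eta$ on $(0,\delta)$; letting $\eta\downarrow 0$ yields $p_{1,0}(0+)=p_0$. The completely symmetric argument swapping $v_1\leftrightarrow v_{-1}$ and the upper/lower obstacles gives $p_{-1,0}(0+)=p_0$.

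For continuity on $(0,T]$, monotonicity reduces the task to excluding jumps. Suppose $p_{1,0}$ jumped at some $t_*\in(0,T)$ from a left-limit $p_+$ down to a right-limit $p_-<p_+$. By continuity of $w$ and monotonicity of the stopping set in $t$, $w\equiv 0$ on $\{t_*\}\times[0,p_+]$, so on $\{t_*\}\times(p_-,p_+)$ both $\p_p w$ and $\p_{pp} w$ vanish and $\mathcal{L}w=0$. On the continuation strip $(p_-,p_+)\times(t_*,t_*+\delta)$, $v_1$ satisfies $\p_t v_1=\mathcal{L}v_1$ while $\p_t v_0\geq\mathcal{L}v_0$ globally, so a direct computation gives $\p_t w-\mathcal{L}w\leq(1-K)c(p)$; interior Schauder theory extends $w$ to $C^{2,1}$ up to $t=t_*$ on this strip, and letting $t\downarrow t_*^+$ produces $\p_t w(p,t_*^+)\leq(1-K)c(p)<0$ for $p\in(p_-,p_+)\subset(0,p_0)$. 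This contradicts $\p_t w\geq 0$, which follows from $\p_t u_1\leq 0$ in \lemref{thm:vt}. The same argument rules out jumps of $p_{-1,0}$.

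The most delicate step is the super-solution verification on the transition strip $p_0-\eta<p<p_0$, where $\mathcal{L}\phi$ is nonzero and must be dominated by the singular $t^{(\al-1)/2}$ contribution; consequently $\delta$ depends on $\eta$, but this is harmless because $\eta$ is sent to zero only after the comparison has been applied.
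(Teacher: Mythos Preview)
Your continuity argument on $(0,T]$ coincides with the paper's: both obtain a sign contradiction by combining the PDE on the jump interval with the time-monotonicity from \lemref{thm:vt}. You phrase it via $w=v_1-v_0-(1-K)$ and use only the inequality $\p_t v_0-\LL v_0\geq 0$, which is arguably cleaner than the paper's use of equality (the paper tacitly relies on $v_0$ being in its own continuation region there).

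For the initial point $p_{1,0}(0+)=p_0$, however, your barrier construction is far more elaborate than needed. The paper simply reapplies the jump argument at $t_0=0$: since $u_1(\cdot,0)\equiv -(1-K)$, if $p_{1,0}(0+)<p_0$ then on $(p_{1,0}(0+),p_0)$ one has (for small $t>0$) that $u_1$ satisfies $(\p_t-\LL)u_1=0$ in the continuation region, and evaluating at $t=0$ gives $\p_t u_1(p,0)=\LL(-(1-K))=-(1-K)c(p)>0$, contradicting $\p_t u_1\leq 0$. Your super-barrier route is workable in principle, but the super-solution verification on the transition strip is imprecise as written: the $\p_t$-contribution equals $C_1\tfrac{1+\al}{2}t^{(\al-1)/2}\phi(p)$, which vanishes with $\phi$, so the claimed domination of the $\LL\phi$ term by the singular $t^{(\al-1)/2}$ factor fails near the edge $p=p_0-\eta$ of $\operatorname{supp}\phi$. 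One must instead combine the two positive terms --- for instance, use the uniformly positive lower bound of $-c(p)(1-K)$ on $[p_0-\eta,p_0-\eta/2]$ and the $\p_t$-contribution on $[p_0-\eta/2,p_0]$ after arranging $\phi\geq \tfrac12$ there. With this refinement the argument goes through, but the paper's direct route avoids all of this machinery.
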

\begin{proof}
We first prove the continuity property of $p_{1,0}(t)$.
By contrary, suppose $p_{1,0}(t)$ is discontinuous at a point $t_0\in(0,T]$, then by the decreasing property of $p_{1,0}(t)$, we have
\begin{equation*}
p_{1,0}(t_0+)<p_{1,0}(t_0-)\leq p_0.
\end{equation*}
Then $u_1=v_0-v_1$ satisfies
\[
u_1(p,t_0)=-(1-K),\quad \forall\; p\in (p_{1,0}(t_0+),p_{1,0}(t_0-)).
\]
Using \eqref{v_eq}, we have
\[
\Big(\p_t u_1-\LL u_1\Big)(p,t_0)=0,\quad p\in (p_{1,0}(t_0+),p_{1,0}(t_0-)).
\]
So $\p_t u_1(p,t_0)=\LL u_1(p,t_0)=[(\mu_1-\mu_2)p+\mu_2-\rho][-(1-K)]>0$  for $p\in (p_{1,0}(t_0+),p_{1,0}(t_0-))\subset[0,p_0]$, which contradicts \eqref{u1t}.
Similarly, we can prove $p_{1,0}(0+)=p_0$ and the corresponding properties of $p_{-1,0}(t)$.
\end{proof}

This result is some surprising to us. It tells us when the time is close to the maturity (that is, $t\leq t_{1}$), the investor should \emph{not} transfer from the flat position to the other positions at all. Such phenomenon has also been observed by Dai, Xu and Zhou (2010).

\begin{proposition}\label{thm:p_qidian}
The free boundaries $p_{0,1}(t)$ and $p_{0,-1}(t)$ are continuous.
\end{proposition}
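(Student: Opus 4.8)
The plan is to show that each of $p_{0,1}(t)$, $p_{0,-1}(t)$ is free of jumps. Since both are monotone by \propref{prop:monotonefreeboundary}, they possess one–sided limits everywhere and continuity reduces to excluding a jump; for $p_{0,1}$ this is a drop $p_{0,1}(t_0-)>p_{0,1}(t_0+)$ at some $t_0\in(0,T]$, and for $p_{0,-1}$ an upward jump. \textbf{The main obstacle} is that the mechanism used for $p_{1,0}$ and $p_{-1,0}$ collapses here. Writing $u_1=v_0-v_1$ and $I=(p_{0,1}(t_0+),p_{0,1}(t_0-))$, a jump would give $u_1(\cdot,t_0)\equiv-(1+K)$ on $I$, and the first inequality in \eqref{v_eq} on the continuation side yields $\p_t u_1=\LL u_1=[(\mu_1-\mu_2)p+\mu_2-\rho]\,(-(1+K))$; but $I\subset(p_0,1)$, so the factor $(\mu_1-\mu_2)p+\mu_2-\rho$ is now \emph{positive}, making $\p_t u_1<0$ entirely consistent with \eqref{u1t}. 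The sign that produced a contradiction for $p_{1,0}$ works against us, so a genuinely different idea is needed.

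My fix is to differentiate in $p$ instead of $t$. Set $\psi=\p_p u_1$. On the continuation region $N=\{-(1+K)<u_1<-(1-K)\}$ — where, near $I$, both obstacles are inactive since ${\cal S}_{1,0}\subseteq(0,p_0]$ lies to the left of $I$ — the function $u_1$ solves $\p_t u_1-\LL u_1=0$ and is smooth by interior parabolic regularity. Differentiating as in \lemref{pro:uetp} (the penalty contributions drop out in $N$) gives
\begin{align*}
\p_t\psi-\TT\psi=(\mu_1-\mu_2)u_1<0\quad\text{in }N,
\end{align*}
the strict inequality holding because $u_1\le-(1-K)<0$ throughout. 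Thus $\psi$ is a strict subsolution of $\p_t-\TT$; combined with $\psi\le0$ from \eqref{up}, the maximum principle forces $\psi<0$ in the interior of $N$. Crucially, this sign is independent of the position of $p$ relative to $p_0$, which is exactly what the $\p_t$–computation lacked.

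Now I would use the geometry of the assumed jump. For $p\in I$ the point $(p,t)$ lies in $N$ for $t<t_0$ and in ${\cal S}_{0,1}$ for $t>t_0$, while $u_1(\cdot,t_0)\equiv-(1+K)$ on $I$; hence $\psi(\cdot,t_0)\equiv0$ on $I$, so $\psi$ attains its maximum value $0$ along the horizontal segment $I\times\{t_0\}$, approached from below through $N$ where $\psi<0$. Because this segment is horizontal it is characteristic for the parabolic operator, so Hopf's lemma does not apply directly; instead I would run the maximum principle on a thin rectangle $R=I'\times(t_0-\eta,t_0)$ with $I'\Subset I$. By continuity $\sup_{\overline R}|\psi|\to0$ as $\eta\to0$, so the bounded zeroth–order term of $\TT$ is dominated by the strictly negative, bounded-away-from-zero source $(\mu_1-\mu_2)u_1$; hence for $\eta$ small $\psi$ is a strict subsolution of the zeroth-order-free operator on $R$. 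The weak maximum principle then places $\max_{\overline R}\psi$ on the parabolic boundary of $R$ (sides and bottom, all interior to $N$, where $\psi<0$), contradicting $\psi\equiv0$ on the top $I'\times\{t_0\}$. This excludes the jump and proves $p_{0,1}\in C((0,T])$.

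For $p_{0,-1}$ the argument is the mirror image: with $u_{-1}=v_0-v_{-1}$ and $\psi=\p_p u_{-1}\ge0$ by \eqref{up}, the same differentiation gives $\p_t\psi-\TT\psi=(\mu_1-\mu_2)u_{-1}>0$ since $u_{-1}\ge1-K>0$, so $\psi$ is a strict \emph{super}solution, $\psi>0$ in $N$, and an upward jump would force $\psi$ to attain its minimum $0$ along a horizontal segment; the analogous thin-rectangle minimum principle gives the contradiction. The two technical points I would be most careful about are justifying that $\psi$ really solves the differentiated equation up to $I\times\{t_0\}$ (interior regularity in $N$ together with the constancy of $u_i$ on the coincidence side, which supplies $\psi(\cdot,t_0)\equiv0$ and the continuity of $\psi$), and handling the zeroth-order coefficient of $\TT$, which I would absorb by the usual exponential substitution before invoking the maximum principle.
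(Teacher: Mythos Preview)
Your argument is correct but takes a genuinely different route from the paper's. The paper does not pass to $\psi=\partial_p u_1$ at all: it simply observes that on the rectangle $\mathcal{D}=[p_{0,1}(t_0+),p_{0,1}(t_0-)]\times[t_0-\varepsilon,t_0]$ both obstacles are inactive, so $u_1=v_0-v_1$ itself satisfies the homogeneous equation $(\partial_t-\LL)u_1=0$ there; since $u_1$ then attains its global minimum $-(1+K)$ along the whole top segment $I\times\{t_0\}$ while $u_1>-(1+K)$ for $t<t_0$, the paper invokes the strong maximum principle directly on $u_1$ to declare this impossible. No differentiation in $p$, no thin-rectangle argument.

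What your approach buys is precisely the care you mention at the end. The zeroth-order coefficient $c=(\mu_1-\mu_2)p+\mu_2-\rho$ of $\LL$ is strictly \emph{positive} on $I\subset(p_0,1)$, and the minimum value $-(1+K)$ is nonzero, so the textbook hypothesis for the strong minimum principle (``$c\le0$'' or ``the extremum equals $0$'') fails; indeed a nonnegative strict subsolution of a zeroth-order-free parabolic operator can perfectly well vanish on its top boundary without vanishing below (think of $w=T-t$), which is exactly the configuration $w=u_1+(1+K)$ is in. By passing to $\psi=\partial_p u_1$, whose value on $I\times\{t_0\}$ is exactly $0$, and by absorbing the zeroth-order term of $\TT$ via your continuity/thin-rectangle device, you make the maximum-principle step fully rigorous. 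In short: the paper's proof is a one-liner working directly with $u_1$, but it leaves the zeroth-order issue to the reader; your proof is longer and works with $\partial_p u_1$, but is self-contained on that delicate point.
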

\begin{proof}
We just prove the continuity property of $p_{0,1}(t)$.
If not, suppose $t_0$ was the discontinuous point, by the decreasing property of $p_{0,1}(t)$, we would have
\begin{equation*}
p_{0,1}(t_0+)<p_{0,1}(t_0-),
\end{equation*}
thus, $u_1=v_0-v_1$ satisfies
\begin{align}\label{1+K}
u_1(p,t_0)=-(1+K),\quad \forall p\in (p_{0,1}(t_0+),p_{0,1}(t_0-)).
\end{align}
Let ${\cal D}:=[p_{0,1}(t_0+),p_{0,1}(t_0-)]\times[t_0-\ep,t_0]$.
Using \eqref{v_eq}, we have
\[
\Big(\p_t u_1-\LL u_1\Big)(p,t)=0,\quad (p,t)\in {\cal D}.
\]
Note that $(p_{0,1}(t_0+),t_0)$ is the minimal point of $u_1$ in ${\cal D}$, by the strong maximum principle, \eqref{1+K} is impossible.
\end{proof}

Base on \eqref{weizhi} and the monotonicity of the four free boundaries, we further have the no-overlapping property.
\begin{proposition}\label{thm:p_fenli}
For the four free boundaries, we have,
\begin{align*}
0\leq p_{0,-1}(t)< p_{1,0}(t)< p_0< p_{-1,0}(t)< p_{0,1}(t)\leq 1, \quad t\in(0,T].
\end{align*}
Moreover, they are strictly monotone.
\end{proposition}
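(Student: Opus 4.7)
The plan is to upgrade the non-strict inequalities in \eqref{weizhi} and the weak monotonicity of Proposition~\ref{prop:monotonefreeboundary} to strict ones. I will work throughout with $u_1:=v_0-v_1$ and $u_{-1}:=v_0-v_{-1}$, which by \thmref{thm:v} are $C^{1+\alpha,(1+\alpha)/2}$ on $\Omega\cup\{t=0\}$, satisfy $u_1\in[-(1+K),-(1-K)]$, $u_{-1}\in[1-K,1+K]$, $\partial_p u_1\le 0\le\partial_p u_{-1}$, and $\partial_t u_1,\partial_t u_{-1}\le 0$ (\lemref{thm:vt}). On any open subregion of $\Omega$ where none of the four obstacles in \eqref{v_eq} is active, each $v_i$ satisfies $(\partial_t-\LL)v_i=0$ and hence $(\partial_t-\LL)u_{\pm 1}=0$; I call such regions \emph{fully interior}. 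Applications of Friedman's parabolic Hopf lemma (which admits $M\ge 0$, $m\le 0$, or $c\le 0$ as a sign condition) furnish the main tool.

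I would first prove strict monotonicity of the inner boundaries $p_{1,0}$ and $p_{-1,0}$. For $p_{1,0}$, suppose by contradiction $p_{1,0}(t)\equiv p^*\in[0,p_0]$ on an interval $[t_1,t_2]\subset(0,T]$. The right-adjacent strip $(p^*,p^*+\eta)\times[t_1,t_2]$ is fully interior for $\eta$ sufficiently small (using \eqref{weizhi} and continuity of the other boundaries), so $(\partial_t-\LL)u_1=0$ there, with $u_1<-(1-K)$ strictly in the interior and $u_1\equiv-(1-K)$ on $\{p^*\}\times[t_1,t_2]$; the $C^{1+\alpha}$-regularity of $\partial_p u_1$ together with $\partial_p u_1\equiv 0$ on the obstacle side forces $\partial_p u_1\equiv 0$ on this segment. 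I would then apply Hopf, choosing the comparison function by the sign of $c(p):=(\mu_1-\mu_2)p+\mu_2-\rho$ on the strip: if $p^*<p_0$ (so $c(p)<0$ there), apply it to $u_1$ at its maximum $-(1-K)<0$ via the $c\le 0$ branch; if $p^*=p_0$, apply it instead to $\phi:=-(1-K)-u_1\ge 0$, which satisfies $(\partial_t-\LL)\phi=(1-K)c(p)>0$ and has minimum $0$ on the segment, via the $m\le 0$ branch. Either way, Hopf yields $\partial_p u_1(p^*,t)<0$, contradicting $\partial_p u_1(p^*,t)=0$. The argument for $p_{-1,0}$ is symmetric. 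Combined with $p_{1,0}(0+)=p_{-1,0}(0+)=p_0$ (\propref{thm:p_qidian}), this yields $p_{1,0}(t)<p_0<p_{-1,0}(t)$ for all $t\in(0,T]$.

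For the outer separation $p_{-1,0}(t)<p_{0,1}(t)$ (the symmetric $p_{0,-1}(t)<p_{1,0}(t)$ follows identically), assume $p^*:=p_{-1,0}(t_0)=p_{0,1}(t_0)$ at some $t_0\in(0,T]$, so $p^*>p_0$ by the previous step. Monotonicity forces $p_{-1,0}(t)=p_{0,1}(t)=p^*$ on $[t_0,T]$, hence $u_1\equiv-(1+K)$ and $u_{-1}\equiv 1+K$ on $[p^*,1)\times[t_0,T]$. Introduce $Z:=v_1-v_{-1}-2(1+K)=u_{-1}-u_1-2(1+K)\le 0$; then $Z\equiv 0$ on $[p^*,1)\times[t_0,T]$, and on the left-adjacent fully-interior strip $(p^*-\eta,p^*)\times(t_0,T]$ one computes $(\partial_t-\LL)Z=2(1+K)c(p)>0$ since $c(p)>0$ there, so $-Z\ge 0$ is strictly sub-caloric with maximum $0$ attained on the right lateral boundary $\{p^*\}\times[t_0,T]$. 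Friedman's Hopf lemma (with $M=0\ge 0$) then gives $\partial_p Z(p^*,t)<0$, contradicting $\partial_p Z=\partial_p u_{-1}-\partial_p u_1=0$ on the segment. Strict monotonicity of $p_{0,1},p_{0,-1}$ follows by the same Hopf-on-$u_1/u_{-1}$ template of the second paragraph, using strips on the opposite side of the boundary and invoking the outer separations just proved. \textbf{The main obstacle} is the Hopf application itself: in each sub-case one must identify which of $u_1$, $u_{-1}$, $\phi$, or $Z$ carries a sign combination ($M\ge 0$, $m\le 0$, or $c\le 0$) that fits Friedman's hypotheses, since $c(p)$ changes sign at $p=p_0$; once the right comparison function is in hand, the remainder of the argument is essentially routine.
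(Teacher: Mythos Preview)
Your overall strategy—exploit monotonicity to force a flat segment of the boundary, then derive a contradiction from Hopf's lemma on an adjacent strip—matches the paper's. The difference is \emph{which} function you feed to Hopf. The paper differentiates the equation in $t$ and applies Hopf to $\partial_t u_1$, which (by \lemref{thm:vt}) is nonpositive and attains its maximum value $0$ along the flat segment; since the extremum equals zero, Hopf holds irrespective of the sign of the zeroth-order coefficient $c(p)=(\mu_1-\mu_2)p+\mu_2-\rho$. You instead try to apply Hopf directly to $u_1$ (or to $\phi$, $Z$).

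This creates a genuine gap in the case $p^*<p_0$. There you claim one can invoke Hopf for $u_1$ at its maximum $M=-(1-K)<0$ ``via the $c\le 0$ branch.'' But the hypothesis $c\le 0$ in the parabolic Hopf lemma does \emph{not} by itself license the conclusion when the boundary maximum is strictly negative; one still needs $M\ge 0$ (or $M=0$ when $c$ is of arbitrary sign). A one-line elliptic counterexample already shows why: $u(x)=-\cosh x$ solves $-u''+u=0$ (so $c=-1\le 0$), has strict maximum $M=-1<0$ at $x=0$ with $u<M$ for $x>0$, yet $u'(0)=0$. Your shifted function $\phi=-(1-K)-u_1$ does not rescue the argument either: on the right strip with $p<p_0$ one gets $(\partial_t-\LL)\phi=(1-K)c(p)<0$, so $\phi$ is a \emph{sub}solution and the minimum principle is unavailable; the same obstruction blocks the usual $\lambda$-shift because the inhomogeneous term $(1-K)c(p)$ keeps the wrong sign near the flat segment where $\phi\to 0$. (Your $Z$ argument has the analogous difficulty, and the sentence ``$-Z\ge 0$ \ldots with maximum $0$'' should read ``minimum $0$.'')

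The fix is exactly the paper's trick: differentiate $(\partial_t-\LL)u_1=0$ in $t$ to obtain $(\partial_t-\LL)(\partial_t u_1)=0$, observe that $\partial_t u_1\le 0$ with $\partial_t u_1\equiv 0$ along the flat segment (since $u_1\equiv-(1-K)$ there), and apply Hopf to $\partial_t u_1$ at its zero maximum to force $\partial_{tp}u_1<0$, contradicting $\partial_{tp}u_1=\partial_t(\partial_p u_1)=0$ on the segment. Because the extremum is exactly $0$, the sign of $c(p)$ is irrelevant and the argument goes through uniformly for all positions of $p^*$ and for all four boundaries.
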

\begin{proof}
We just prove the first strict inequality. Suppose there exits $t_0\in(0,T]$ such that $p_{0,-1}(t_0)=p_{1,0}(t_0)$. Denote $c=p_{0,-1}(t_0)=p_{1,0}(t_0)$. Since $p_{0,-1}(t)$ is increasing, $p_{1,0}(t)$ is decreasing and $p_{0,-1}(t)\leq p_{1,0}(t)$, we have
\[
p_{0,-1}(t)=p_{1,0}(t)\equiv c,\quad\forall\; t\in [t_0,T].
\]
Thus $u_1=v_0-v_1$ satisfies
\begin{align}\label{partialtp}
u_1(c,t)=-(1-K),\quad \p_p u_1(c,t)=0,\quad \p_t u_1(c,t)=0,\quad\p_{tp} u_1(c,t)=0,\quad\forall\; t\in [t_0,T].
\end{align}
By \eqref{u1t}, $\p_t u_1\leq 0$, the above means $\p_t u_1$ gets its maximum value $0$ at $(c,t)$ for any $t\in [t_0,T]$.
\par
On the other hand, note that
\[
\Big(\p_t-\LL \Big)u_1=0,\quad\forall\; (p,t)\in [c,p_0]\times[t_0,T],
\]
differential it w.r.t. $t$ we have
\[
\Big(\p_t-\LL \Big)\p_t u_1=0,\quad\forall\; (p,t)\in [c,p_0]\times[t_0,T].
\]
Since $\p_t u_1$ gets its maximum value at $(c,t)$ for any $t\in [t_0,T]$, by the Hopf lemma, we have
\[
\p_{tp} u_1(c,t)<0,
\]
which contradicts to \eqref{partialtp}.
The remaining conclusions can be proved similarly.
\end{proof}

\begin{proposition}\label{thm:p_fenli}
We have
\begin{align*}
p_{0,-1}(t),\quad p_{1,0}(t),\quad p_{-1,0}(t), \quad p_{0,1}(t)\in C^{\infty}((0,T]).
\end{align*}
\end{proposition}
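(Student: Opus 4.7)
My plan is to apply the classical $C^\infty$-regularity theory for free boundaries of parabolic obstacle problems, in the spirit of Friedman, Kinderlehrer--Nirenberg and Caffarelli, after reducing the coupled VI system near each free boundary to a scalar obstacle problem. I describe the argument for $p_{0,1}(t)$; the other three boundaries are treated by symmetric arguments.

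\textbf{Step 1 (Localization to a scalar obstacle problem).} As already observed in the proof of \propref{p1}, $u_1=v_0-v_1$ satisfies the scalar parabolic obstacle problem
\[
\min\{\p_t u_1-\LL u_1,\;u_1+(1+K)\}=0\quad\text{on }[p_0,1]\times(0,T],
\]
with constant obstacle $-(1+K)$ and smooth, interior-non-degenerate coefficients. Hence $p_{0,1}(t)$ is precisely the free boundary of a classical one-dimensional parabolic obstacle problem. Analogously, $p_{0,-1}(t)$ is the free boundary of an obstacle problem for $u_{-1}=v_0-v_{-1}$, while the no-overlapping and strict-monotonicity results isolate $p_{1,0}$ and $p_{-1,0}$ into two further scalar obstacle problems derived from the other components of \eqref{v_eq}.

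\textbf{Step 2 (Hodograph transform and Schauder bootstrap).} With strict monotonicity already established and the $W^{2,1}_{q,\mathrm{loc}}$-regularity of $u_1$ (\thmref{thm:v}) in hand, a quadratic barrier comparison gives the non-degeneracy $u_1(p,t)+(1+K)\geq c(p_{0,1}(t)-p)^2$, whence a local Lipschitz estimate for $p_{0,1}$. I would then apply the Kinderlehrer--Nirenberg partial hodograph transformation, using $y=-\p_p u_1(p,t)$ as a new independent variable in place of $p$; this flattens $\{p=p_{0,1}(t)\}$ to $\{y=0\}$, and the transformed unknown $P(y,t):=p$ solves a uniformly parabolic quasilinear equation with $C^\infty$ coefficients on the fixed half-domain $\{y\geq 0\}$. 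Iterated parabolic Schauder estimates then give $P\in C^\infty$ up to $\{y=0\}$, so that $p_{0,1}(t)=P(0,t)\in C^\infty((0,T])$.

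\textbf{Main obstacle.} The step I expect to be hardest is establishing the strict non-degeneracy $\p_{pp}u_1>0$ at the free boundary, which is needed to make the hodograph change of variables a diffeomorphism. My plan is to obtain it by a Hopf-type argument: $\p_p u_1$ satisfies a linear parabolic equation in the non-contact region analogous to \eqref{w_eq}, it vanishes on the free boundary by smooth fit, and it satisfies $\p_p u_1\leq 0$ everywhere by \thmref{thm:v}; the strong maximum principle then gives $\p_p u_1<0$ strictly inside the non-contact region, and the parabolic Hopf lemma promotes this to $\p_{pp}u_1>0$ at the free boundary. The same blueprint -- scalar obstacle reduction, non-degeneracy, hodograph transform, Schauder bootstrap -- then handles the remaining three free boundaries, and the theorem follows.
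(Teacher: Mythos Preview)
Your proposal is correct and aligns with the paper's approach: the paper's own ``proof'' of this proposition consists of a single sentence deferring to the standard literature (Friedman \cite{Fr75} and its applications in \cite{DZ10,DX10}), and what you have outlined---reduction to scalar parabolic obstacle problems via the no-overlapping proposition, followed by the Friedman/Kinderlehrer--Nirenberg hodograph-transform bootstrap---is precisely the content of those references. One small refinement: for $p_{1,0}$ and $p_{-1,0}$ the obstacle is not constant but is $v_0\pm(1\mp K)$, so you need the strict separation $p_{0,-1}<p_{1,0}$ and $p_{-1,0}<p_{0,1}$ to guarantee that $v_0$ is $C^\infty$ (it solves $\partial_t v_0=\LL v_0$) in a neighborhood of those boundaries, yielding a smooth obstacle; you have correctly flagged that this separation is available from the preceding proposition.
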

\begin{proof}
The proof of the smoothness is technical, we refer interested reads to \cite{DZ10,DX10,Fr75}
\end{proof}

\newpage
\section*{Appendix}

\section*{Financial background}\setcounter{equation}{0}
Let $(\Omega, \cF, (\cF_{t})_{t\geq 0}, \mathbb{P})$ be a fixed filtered complete probability space on which is defined a standard one-dimensional Brownian motion $W$. It represents the financial market.
A stock is given in the market and its price process $S=(S_{t})_{t\geq 0}$ satisfies the stochastic differential equation (SDE)
\[\dd S_{t}=\mu(I_{t}) S_{t}\dt+\sigma S_{t}\dd W_{t},\]
where the volatility $\sigma>0$ is a known constant, but the market status process $I_{t}$ and the noice process $W_{t}$ are unobservable to the investor.
It is also assumed that $\cF_t$ is equal to $\sigma\{S(s):0\leq s\leq t\}$ augmented by all the
$\mathbb{P}$-null sets and $\cF_T\subseteq \cF$.
\par
To model drift uncertainty, we assume $I_s\in \{1,2\}$ is a two-state Markov
chain. At each time $s$, $I_s=1$ indicates a bull market status and $I_s=2$ a bear market status.
Suppose the transition probabilities are given by
\begin{align*}
\P(I_{s+h}=j\;|\;I_s=i)=\left\{
\begin{array}{ll}
\la_i h+o(h),& j\neq i,\\[2mm]
1-\la_i h+o(h),& j=i,
\end{array}
\right.\quad i,j=1,2.
\end{align*}
We assume $I$ and $W$ are independent processes and $\mu_{1}> \mu_{2}$ where $\mu_i=\mu(i)$, $i = 1, 2.$
If $\mu_{1}=\mu_{2}$, then the market status uncertainty disappears and our financial market becomes the classical Black-Scholes market.
\par
The investor is allowed to trade the stock at any time. But, at each time, the investor can only take one of the three positions: $\{-1, 0, 1\}$, where $i=0$ corresponds to a flat position (no stock holding), $i=1$ to a long position (holding one share of the stock), while $i=-1$ to a short position (short sale one share of the stock).
\par
The investor's trading strategy beginning at time $t$ is modeled by a sequence $(\tau_n, \iota_n)_{n\in \Z_+}$, where $t=\tau_0<\tau_1<\cdots$ is a strictly increasing sequence of $(\cF_{s})_{s\geq t}$-stopping times, and $\iota_n$ valued in $\{-1, 0, 1\}$, requiring ${\cal F}_{\tau_n}$ measurable, represents the position regime decided at $\tau_n$ until the next trading time. By misuse of notation, we denote by $\al^t_s$ the value of the regime at any time $s$ (begin with time $t$), namely,
\[
\al^t_s = \iota_0 1_{\{t\leq s <\tau_1\}} +\sum_{n=1}^{\infty}\iota_n 1_{\{\tau_n\leq s <\tau_{n+1}\}},\quad s\geq t.
\]
Since $(\tau_n, \iota_n)_{n\in \Z_+}$ and $\al^t$ are one-to-one, we will not distinguish them and call them switching controls starting from time $t$.

Let $0<K<1$ denote the percentage of transaction fee.
We denote by $g_{i,j}(S)$ the trading gain when switching from a position $i$ to $j$, $i, j\in\{-1, 0, 1\}$, $j\neq i$, for a current stock price $S$. The switching gain functions are given by:
\begin{align*}
&g_{0,1}(S)=g_{-1,0}(S)=-S(1+K),\\
&g_{0,-1}(S)=g_{1,0}(S)=S(1-K).
\end{align*}
For integrity, set
\begin{align*}
&g_{0,0}(S)=g_{1,1}(S)=g_{-1,-1}(S)=0,\\
&g_{-1,1}(S)=g_{-1,0}(S)+g_{0,1}(S)=-2S(1+K),\\
&g_{1,-1}(S)=g_{1,0}(S)+g_{0,-1}(S)=2S(1-K).
\end{align*}
By misuse of notation, we also set $g(S,i,j)=g_{i,j}(S)$.

At initial time $t$, given the stock price $S_t = S$, the reward functions of the decision sequences for a switching control $\al$ starting from time $t$, is given by
\[
J(S,t,\al) = \E \bigg[\sum_{n=1}^{\infty} e^{-\rho (\tau_n-t)} g(S_{\tau_n},\al_{\tau_n^-},\al_{\tau_n})1_{\{\tau_n\leq T\}}+e^{-\rho (T-t)} g(S_T,\al_T,0)\bigg|S_t = S \bigg],
\]
where $\rho > 0$ is the discount factor satisfying
\[
\mu_2<\rho<\mu_1.
\]
If $\rho$ is out of the above range, the problem admits trivial solutions only.
The last term in the reword function means that at the maturity time $T$ the net position must be flat.
\par
Note that only the stock price $S_s$ is observable to the investor at time $s$. The market status $I_s$ is not directly observable. Thus, it is necessary to convert the problem into a completely
observable one. One way to accomplish this is to use the Wonham filter \cite{Wo65}.
Let $p_s = \P(I_s =1\;|\;\cF_{s})$. Then we can show (see \cite{Wo65}) that $p_s$ satisfies
the following SDE
\[
\d p_s = [-(\la_1 +\la_2) p_s + \la_2] \d s + (\mu_1-\mu_2)p_s(1-p_s)\sigma\d \nu_s,
\]
where $\nu_s$, given by
\[
\d \nu_s =\frac{ \d \log(S_s) + [(\mu_1+\mu_2)p_s + \mu_2-\sigma^2/2]\d s}{\sigma},
\]
is called the innovation process and is a standard Brownian motion under the filtration $(\cF_{t})_{t\geq 0}$ (see, e.g., \cite{Ok03}).

Given $S_t = S$ and $p_t = p$, the investor's problem is to choose a switching control $(\al^t_s)_{s\geq t}$ starting from $t$, to maximize the reward function
\[
J(S,p,t,\al^t)
\]
subject to
\begin{align*}
&\d S_s = S_s [(\mu_1-\mu_2)p_s + \mu_2] \d s + S_s\sigma\d \nu_s,\quad S_t = S,\\
&\d p_s = [-(\la_1 + \la_2)p_s + \la_2] \d s + (\mu_1-\mu_2)p_s(1-p_s)\sigma\d \nu_s, \quad p_t = p.
\end{align*}
Indeed, this new problem is a completely observable one, because the conditional probabilities
can be obtained using the stock price up to time $s$.
\par
Let $V_i(S, p, t)$ denote the value function with the state $(S,t,p)$ and net positions $i \in\{-1,0,1\}$ at time $t$. That is,
\[
V_i(S, p, t)=\sup\limits_{\al\in{\cal A}^i_t}J(S, p, t,\al^t),
\]
where ${\cal A}^i_t$ denotes the set of switching controls $\al^t $ starting from $t$ with initial position $\iota_0 = i$.

Define an operator
\begin{align*}
\TT &=\frac{1}{2}\Big(\frac{(\mu_1-\mu_2)p(1-p)}{\si}\Big)^2\p_{pp} +\frac{1}{2}\si^2S^2\p_{SS}+S(\mu_1-\mu_2)p(1-p)\p_{Sp}\\[2mm]
&\quad\;+[-(\la_1+\la_2)p+\la_2]\p_p+S[(\mu_1-\mu_2)p+\mu_2]\p_S-\rho.
\end{align*}
The HJB equation associated with our optimal stopping time problem can be given formally as follows
\begin{align*}
\left\{
\begin{array}{ll}
\min\Big\{-\p_t V_0-\TT V_0,\;V_0-V_1+S(1+K),\;V_0-V_{-1}-S(1-K)\Big\}=0,\\[2mm]
\min\Big\{-\p_t V_1-\TT V_1,\;V_1-V_0-S(1-K)\Big\}=0,\\[2mm]
\min\Big\{-\p_t V_{-1}-\TT V_{-1},\;V_{-1}-V_0+S(1+K)\Big\}=0,
\end{array}
\right.
\quad
\begin{array}{ll}0<p<1,\\[2mm]0\leq t<T,\end{array}
\end{align*}
with the terminal conditions
\begin{align*}
\left\{
\begin{array}{ll}
V_0(p,T)=0,\\[2mm]
V_1(p,T)=S(1-K),\\[2mm]
V_{-1}(p,T)=-S(1+K),
\end{array}
\right.
\quad 0<p<1.
\end{align*}

It is easy to see from the model that the value functions $V_i(S,p,t),\;i=-1,0,1$ are affine in $S$. This motivates us to adopt the following transformation $\widehat{V}_i(p, t) = V_i(S,p,t)/S$.
Further, for convenience of discussion, In order to transform the terminal value problem to an initial value problem, let $v_i(p, t) = \widehat{V}_i(p, T-t)$, then $v_i$ satisfies the following variational inequality system
\begin{align*}
\left\{
\begin{array}{ll}
\min\Big\{\p_t v_0-\LL v_0,\;v_0-v_1+(1+K),\;v_0-v_{-1}-(1-K)\Big\}=0,\\[2mm]
\min\Big\{\p_t v_1-\LL v_1,\;v_1-v_0-(1-K)\Big\}=0,\\[2mm]
\min\Big\{\p_t v_{-1}-\LL v_{-1},\;v_{-1}-v_0+(1+K)\Big\}=0,
\end{array}
\right.
\; (p,t)\in\Omega,
\end{align*}
with initial conditions
\begin{align*}
\left\{
\begin{array}{ll}
v_0(p,0)=0,\\[2mm]
v_1(p,0)=1-K,\\[2mm]
v_{-1}(p,0)=-(1+K),
\end{array}
\right.
\quad 0<p<1,
\end{align*}
where $\Omega=(0,1)\times[0,T)$ and
\begin{align*}
\LL &=\frac{1}{2}\Big(\frac{(\mu_1-\mu_2)p(1-p)}{\si}\Big)^2\p_{pp}\\[2mm]
&\quad\;+[-(\la_1+\la_2)p+\la_2+(\mu_1-\mu_2)p(1-p)]\p_p+(\mu_1-\mu_2)p+\mu_2-\rho.
\end{align*}
This is the variational inequality system \eqref{v_eq} that is investigated in this paper.

\newpage

\bibliographystyle{plainnat}


\begin{thebibliography}{1}


\bibitem{DX10}
M. Dai, Z. Xu, and X. Zhou.
{\em Continuous-time markowitz's model with transaction costs}.
SIAM J. Financ. Math., 1:96-125, 2010.

\bibitem{DY09}
M. Dai and F. Yi.
{\em Finite horizon optimal investment with transaction costs: A parabolic
  double obstacle problem}.
 J. Differ. Equ., 246:1445-1469, 2009.

\bibitem{DZ11}
M. Dai, Q. Zhang, and Zhu Q.J.
{\em Optimal trend following trading rules}.
 Social Science Electronic Pulishing, 41(2), 2011.

\bibitem{DZ10}
M. Dai, Q. Zhang, and Q.J. Zhu.
{\em Trend following trading under a regime switching model}.
SIAM J. Financ. Math., 1:780-810, 2010.

\bibitem{Fr75}
A. Friedman.
{\em Parabolic variational inequalities in one space dimension and
  smoothness of the free boundary}.
J. Funct. Anal., 18:151-176, 1975.

\bibitem{KK13}
A.Q.M. Khaliq, B. Kleefeld, and R.H. Liu.
{\em Solving complex pde systems for pricing american options with
  regime-switching by efficient exponential time differencing schemes}.
 	Numer. Meth. Part Differ. Equ.,
  29(1):320-336, 2013.

\bibitem{Li96}
G.M. Lieberman.
{\em Second Order Parabolic Differential Equations}.
World Scientific, 1996.

\bibitem{NP16}
M.M. Ngo and H. Pham.
{\em Optimal switching for the pairs trading rule: A viscosity solutions
  approach}.
J. Math. Anal. Appl.,
  441:403-425, 2016.

\bibitem{Ok03}
B. {\O}ksendal.
{\em Stochastic differential equations, 6td ed}.
Springer-Verlag, Berlin, New York, 2003.

\bibitem{OR73}
O.A. Oleinik and E.V. Radkevic.
{\em Second-order equations with nonnegative characteristic form}.
American Mathematical Society Rhode Island and Plenum Press, New
  York, 1973.

\bibitem{Wo65}
W.M. Wonham.
{\em Some applications of stochastic differential equations to optimal
  nonlinear filtering}.
 SIAM J. Control Optim., 2:347-369, 1965.

\bibitem{Yi08}
F. Yi.
{\em American put option with regime-switching volatility (finite time
  horizon)¡ªvariational inequality approach}.
Math. Meth. Appl. Sci., 31:1461-1477, 2008.


\end{thebibliography}

\end{document}